\newtheorem{thm}{Theorem}[section]
\newtheorem{exam}[thm]{Example}
\def\C{\mathbb C}
\def\dim{\operatorname{dim}}
\def\rank{\operatorname{rank}}
\def\codim{\operatorname{codim}}
\newtheorem{cor}[thm]{Corollary}
\newtheorem{lem}[thm]{Lemma}
\theoremstyle{definition}
\newtheorem{rem}[thm]{Remark}
\def\C{\mathbb C}
\def\Q{\mathbb Q}
\def\dim{\operatorname{dim}}
\def\rank{\operatorname{rank}}
\def\codim{\operatorname{codim}}
\begin{document}
\title{Equisingularity of families of isolated determinantal singularities}

\author{J. J. Nuño-Ballesteros, B. Oréfice-Okamoto, J. N. Tomazella}

\address{Departament de Geometria i Topologia,
Universitat de Val\`encia, Campus de Burjassot, 46100 Burjassot
SPAIN}

\email{Juan.Nuno@uv.es}

\address{Departamento de Matem\'atica, Universidade Federal de S\~ao Carlos, Caixa Postal 676,
13560-905, S\~ao Carlos, SP, BRAZIL}

\email{bruna@dm.ufscar.br}

\address{Departamento de Matem\'atica, Universidade Federal de S\~ao Carlos, Caixa Postal 676,
13560-905, S\~ao Carlos, SP, BRAZIL}

\email{tomazella@dm.ufscar.br}

\thanks{The first author was partially supported by DGICYT Grant MTM2012--33073 and FAPESP Grant 2012/18805-2. 
The second author was partially supported by FAPESP Grant 2013/14014-3.
The third  author was partially supported by FAPESP Grant 2013/10856-0.
}

\subjclass[2000]{Primary 32S15; Secondary 58K60, 32S30} 

\keywords{Determinantal singularity, topological triviality, Whitney equisingularity}

\begin{abstract}
We study the topological triviality and the Whitney equisingularity of a family of isolated determinantal singularities. On one hand, we give a Lê-Ramanujam type theorem for this kind of singularities by using the vanishing Euler characteristic. On the other hand, we extend the results of Teissier and Gaffney about the Whitney equisingularity of hypersurfaces and complete intersections, respectively, in terms of the constancy of the polar multiplicities.
\end{abstract}

\maketitle
\section{Introduction}
The celebrated Lê-Ramanujan theorem \cite{LR} states that any 1-parameter family of isolated hypersurface singularities $\{(X_t,0)\}$ of dimension  $d\ne2$ has constant topological type if the Milnor number $\mu(X_t,0)$ is constant on $t$. The same result is also known to be true for isolated complete intersection singularities (ICIS) of dimension $d\ne2$ (see \cite{Param}). The condition $d\ne2$ is needed because of the use of the $h$-cobordism theorem in the proof. It is worth to mention that the case $d=2$ remains open until now. For space curves (not necessarily ICIS), the topological triviality of the family was obtained by Buchweitz-Greuel, by using an appropriate version of Milnor number (see \cite{BG}).

For plane curves, Zariski proved \cite{Z} that the $\mu$-constant condition is also equivalent to the fact that the family is Whitney equisingular. However, the Briançon-Speder example \cite{BS} shows that this is not true anymore for hypersurfaces of dimension $d\ge2$. Teissier solved completely in \cite{T} the  Whitney equisingularity question for hypersurfaces. He showed that the family is Whitney equisingular if and only if the $\mu^*$ sequence is constant on the family. This result was generalized by Gaffney \cite{G,G2} for ICIS of any dimension $d$, with the use of the polar multiplicities $m_i(X_0,0)$, $i=0,\dots,d$. Note that the polar multiplicities $m_i(X_0,0)$, for $i=0,\dots,d-1$ are defined for any variety $(X_0,0)$ (see \cite{LT}), but $m_d(X_0,0)$ is defined only for ICIS in \cite{G}. For space curves, a family is Whitney equisingular if and only if $m_1(X_t,0)$ is constant on $t$, where $m_1(X_t,0)$ is defined as the Milnor number of a generic linear projection (see \cite{NT}).

Here we study the topological triviality and the Whitney equisingularity, for families of isolated determinantal singularities (IDS). Determinantal varieties are those defined by the zeros of an ideal generated by the minors of a given size of a matrix. We assume that the variety has the expected dimension according to the sizes of the matrix and of the minors. Moreover, we restrict ourselves to the case that the determinantal variety has isolated singularity (in some sense related to the determinantal structure). Any ICIS (and hence any hypersurface) is a particular case of an IDS.

On one hand, we show that a family $\{(X_t,0)\}$ of IDS of dimension $d\ne 2$ has constant topological type provided that:
\begin{enumerate}
\item  the family is good (i.e., it has isolated singularity uniformly in the family), 
\item the determinantal smoothing of any member $(X_t,0)$ has the homotopy type of a bouquet of $d$-spheres,
\item the vanishing Euler characteristic $\nu(X_t,0)$ is constant on $t$.
\end{enumerate}
The number $\nu(X_0,0)$ was introduced in a previous paper \cite{NOT} for an IDS
$(X_0,0)$ and it is defined as  
$$
\nu(X_0,0)=(-1)^{d}(\chi(X_A)-1),
$$
where $X_A$ is the determinantal smoothing of $(X_0,0)$ and $\chi(\cdot)$ denotes the Euler characteristic. In the particular case of an ICIS, the determinantal smoothing coincides with the Milnor fibre and hence, $\nu(X_0,0)=\mu(X_0,0)$. Moreover, for ICIS the condition (2) is always satisfied and the condition (1) follows from (3). Thus, our result generalizes the ICIS version of the Lê-Ramanujam theorem.

On the other hand, we also show that a good family $\{(X_t,0)\}$ of IDS of any dimension $d$ is Whitney equisingular if and only if all the polar multiplicities $m_i(X_t,0)$, $i=0,\dots,d$ are constant on the family. Again, we find in \cite{NOT}  an adapted version of the top polar multiplicity $m_d(X_0,0)$ of an IDS $(X_0,0)$, which generalizes that of \cite{G} for ICIS. Hence, our theorem generalizes the results of Teissier or Gaffney in the particular cases of hypersurfaces or ICIS respectively.

\section{Isolated Determinantal Singularities}

We recall here from \cite{NOT} the main definitions and properties of IDS.
Let $0<s\leq m\leq n$ be integer numbers, $M_{m,n}$ be the set of complex matrices of size $m\times n$ and let $M_{m,n}^s$ be the subset of matrices with rank less than $s$.

We consider a holomorphic map germ $$f:(\C^N,0)\rightarrow M_{m,n}$$ and we say that the germ $$(X_0,0)=f^{-1}(M_{m,n}^s)$$is a determinantal germ of type $(m,n;s)$ if the dimension of $(X_0,0)$ is equal to $$d=N-(m-s+1)(n-s+1).$$

In \cite{NOT}, we introduced a new concept of isolated determinantal singularity. We defined the germ $(X_0,0)$ to be an isolated determinantal singularity, shortening IDS, if $s=1$ or $N<(m-s+2)(n-s+2)$, $X_0$ is smooth at $x$ and $\mbox{rank}(f(x))=s-1$, for all $x\neq 0$ in a neighborhood of the origin.

We fix a small enough representative $f:B_\epsilon\to M_{m,n}$, where $B_\epsilon$ is the open ball centered at the origin and of radius $\epsilon>0$. Given a matrix $A$ in $M_{m,n}$, let  $f_A:B_\epsilon\rightarrow M_{m,n}$ be the map defined by $$f_A(x)=f(x)+A$$ and let $$X_A=f_A^{-1}(M_{m,n}^s).$$ We see in \cite{NOT} that there exists a non-empty Zariski open set $W$ in $M_{m,n}$ such that $X_A$ is smooth if $A$ belongs to $W$ and the Euler characteristic of $X_A$, $\chi(X_A)$ does not depend on $A$ in $W$. The vanishing Euler characteristic of the IDS $(X_0,0)$ was, then, defined by
$$
\nu(X_0,0)=(-1)^{d}(\chi(X_A)-1),
$$
with $A$ in the Zariski open set $W$. We also call $X_A$ the special determinantal smoothing of $(X_0,0)$. In the case of an ICIS ($s=1$), $X_A$ can be seen as the Milnor fibre of $(X_0,0)$ and hence, $\nu(X_0,0)$ coincides with the Milnor number $\mu(X_0,0)$.

We recall now the notion of determinantal deformation. We consider a
 map germ $F:(\C^N\times \C,0)\rightarrow M_{m,n}$ such that $F(x,0)=f(x)$ for all $x\in \C^N$. Then we denote $\mathcal X=F^{-1}(M_{m,n}^s)$ and we say that the projection
 \begin{eqnarray*}
\pi:(\mathcal X,0)&\to&(\C,0)\\
(x,t)&\mapsto&t
\end{eqnarray*} 
is a determinantal deformation of $(X_0,0)$. If we fix again a representative $F:B_\epsilon\times D\to M_{m,n}$, then we set $f_t(x)=F(x,t)$ and $X_t=f_t^{-1}(M_{m,n}^s)$. 
 With this notation, it is common to write simply $X_t$ for the deformation, instead of $\pi$.

A determinantal smoothing of $(X_0,0)$ is a determinantal deformation $X_t$ such that $X_t$ is smooth and $\rank(f_t(x))=s-1$ for all $x\in X_t$ and for all $t\in D-\{0\}$. In this case, we showed in \cite{NOT}:
\begin{equation*} 
\nu(X_0,0)=(-1)^{d}(\chi(X_t)-1).
\end{equation*}

If $X_t$ is a determinantal deformation of $(X_0,0)$ as above, we say that:
\begin{enumerate}
\item $X_t$ is origin preserving if $0\in S(X_t)$, for all $t$ in $D$, where $S(X_t)$ denotes the singular set of $X_t$. Then we say that $\{(X_t,0)\}_{t\in D}$ is a $1$-parameter family of IDS;
\item $X_t$ is good if there exists $\epsilon>0$ such that $S(X_t)=\{0\}$ on $B_\epsilon$, for all $t$ in $D$;
\item $X_t$ has constant topological type if for each $t\in D$ there exists $\epsilon_t > 0$ such that $(B_{\epsilon_t},X_t)$ is homeomorphic to $(B_\epsilon,X_0)$.
\item $X_t$ is Whitney equisingular if it is good and $\{\mathcal X-T, T\}$ satisfies the Whitney conditions, where $T=\{0\}\times D$.
\end{enumerate}

Finally, we also recall the definition of polar multiplicities. They were defined by Lê-Teissier \cite{LT} for any $d$-dimensional variety $(X_0,0)\subset(\C^N,0)$ and for each $i=0,\dots,d-1$.  We choose a generic linear projection $p:\C^N\to\C^{d-i+1}$, then the $i$-th polar multiplicity is equal to
$$
m_i(X_0,0)=m_0(\overline{S(p|_{X_0^0})},0),
$$
where $X_0^0$ is the smooth part of $X_0$ and $m_0(Z,0)$ denotes the usual multiplicity of any variety $(Z,0)$. 

In the case that $(X_0,0)$ is an IDS, we introduced in \cite{NOT} the $d$-th polar multiplicity as
$$
m_d(X_0,0)=\#S(p|_{X_A}),
$$
where now $X_A$ is a determinantal smoothing and $p:\C^N\to\C$ is a generic linear function.

\section{The topology of the determinantal smoothing}

In this section, we describe the topology of the determinantal smoothing $X_A$ of an IDS $(X_0,0)$ of dimension $d$. 
We know that $X_A$ has the homotopy type of a CW-complex of real dimension $d$. We start by showing that the polar multiplicities of $(X_0,0)$ are related to the cell structure of the smoothing $X_A$.

\begin{lem}\label{tipo kiumars} Let $M=X_A\cap B_{\epsilon'}$ for some $0<\epsilon'<\epsilon$. Let $g:X_A\rightarrow\C$ be a holomorphic Morse function with no critical points on $X_A\cap S_{\epsilon'}$. Then,
\begin{equation*}
M\cong(g^{-1}(c)\cap M) \, \mbox{with $m$ $d$-cells attached,} 
\end{equation*}
where $m=\#S(g|_M)$ and $c$ is a regular value of $g$.
\end{lem}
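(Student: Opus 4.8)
The plan is to use classical Morse theory for manifolds with boundary, applied to the smooth manifold $M = X_A \cap B_{\epsilon'}$. The key point is that $X_A$ is smooth (since $A \in W$), so $M$ is a compact smooth manifold with boundary $X_A \cap S_{\epsilon'}$, and $g|_M$ is a Morse function whose critical points all lie in the interior of $M$ (by the hypothesis that $g$ has no critical points on $X_A \cap S_{\epsilon'}$). The number of these critical points is exactly $m = \#S(g|_M)$. Since $g$ is a holomorphic Morse function on a complex $d$-dimensional manifold, each nondegenerate critical point has, by the holomorphic Morse lemma, a local normal form $g = c + z_1^2 + \dots + z_d^2$; viewing this over $\R$, the real Hessian has signature $0$, so every critical point has real Morse index exactly $d$.

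The main step is to organize the Morse-theoretic passage. First I would take $c$ to be a regular value and consider the sublevel set structure. Here I want to realize $M$ as being built from the regular fibre $g^{-1}(c) \cap M$ by attaching cells. The cleanest route is to pick a value $c$ such that the ``cylinder'' $g^{-1}(c) \cap M$ separates $M$, and to argue that crossing each critical value of $g|_M$ corresponds, by the standard handle-attachment theorem of Morse theory, to attaching a handle of index equal to the Morse index $d$. Since each such handle is homotopy equivalent to attaching a $d$-cell, and since there are exactly $m$ critical points each of index $d$, we obtain $M$ from $g^{-1}(c) \cap M$ by attaching $m$ cells of dimension $d$, up to homotopy equivalence. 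I would be careful to choose $c$ appropriately relative to the critical values: one standard way is to take $c$ below all critical values and observe that $g^{-1}(c) \cap M$ is homotopy equivalent to a slice from which $M$ is recovered by passing all $m$ critical values.

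The technical care needed is at the boundary. Because $M$ has boundary $X_A \cap S_{\epsilon'}$, I must ensure the Morse theory respects this boundary; this is guaranteed precisely by the hypothesis that $g$ has no critical points on $X_A \cap S_{\epsilon'}$. Concretely, one can use a gradient-like vector field for $g|_M$ that is tangent to the boundary, or invoke the version of Morse theory for manifolds with boundary where $g$ restricts to a function without boundary critical points. The handle-attachment argument then proceeds exactly as in the boundaryless case within the interior, and the boundary simply flows along without contributing cells. The homotopy equivalence $M \cong (g^{-1}(c) \cap M)$ with $m$ $d$-cells attached follows by collapsing the flow and identifying each handle with its core cell.

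The main obstacle I anticipate is justifying that all Morse indices equal $d$ and that the attaching happens in the right direction, i.e., that one attaches cells to the fibre rather than the reverse. The index computation is forced by holomorphicity and is essentially immediate from the holomorphic Morse lemma, so this is routine once stated. The direction of attachment requires choosing the regular value $c$ correctly (so that $g^{-1}(c) \cap M$ plays the role of the ``initial'' level set) and tracking orientations through the handle decomposition; this bookkeeping, together with handling the boundary so that no spurious cells appear, is where the real care lies, though all the ingredients are standard.
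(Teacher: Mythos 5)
The paper itself does not reprove this lemma; it simply cites the proof of Theorem A.5 of \cite{NOT}, which is the standard Lefschetz-pencil argument you are reconstructing. Your overall strategy --- the holomorphic Morse lemma forcing every critical point to contribute in middle dimension $d$, handle attachment, and boundary control via the absence of critical points on $X_A\cap S_{\epsilon'}$ --- is the right one and is essentially the argument of the cited reference.

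There is, however, one genuine gap in your sketch: you run ``Morse theory for $g$'' as if $g$ itself were a real Morse function, but $g$ is complex-valued and has no Morse index. The index-$d$ computation you give is really a computation for $\mathrm{Re}(g)$ (or better, $\mathrm{Re}(e^{i\theta}g)$ for generic $\theta$, so that distinct critical points have distinct real parts of critical values): in the normal form $g=c+z_1^2+\cdots+z_d^2$ one has $\mathrm{Re}(g)=\mathrm{Re}(c)+\sum_j(x_j^2-y_j^2)$, whence index $d$. Once you do Morse theory on $\mathrm{Re}(e^{i\theta}g)$, what you build up by attaching $d$-handles are the sublevel sets $M_a=\{\mathrm{Re}(e^{i\theta}g)\le a\}$, which are real $2d$-dimensional, whereas $g^{-1}(c)\cap M$ is real $(2d-2)$-dimensional; these are different kinds of objects. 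The missing step is the identification, up to homotopy, of the initial sublevel set $M_{a_0}$ (with $a_0$ below all critical values of $\mathrm{Re}(e^{i\theta}g)$) with a fibre of $g$: since $M_{a_0}$ contains no critical points, $g$ restricts there to a proper submersion over a contractible region of regular values (transversality to $\partial M$ being arranged by shrinking $\epsilon'$ or perturbing $g$), so by the Ehresmann fibration theorem it fibres and deformation retracts onto a single fibre $g^{-1}(c)\cap M$. Writing ``observe that $g^{-1}(c)\cap M$ is homotopy equivalent to a slice'' skips exactly this; likewise ``take $c$ below all critical values'' needs the reinterpretation above, since the critical values are complex numbers and carry no order. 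With these two points supplied your argument closes and coincides with the cited one. (Alternatively, one can avoid sublevel sets altogether and deformation retract $M$ onto $g^{-1}(c)\cup\bigl(\text{the Lefschetz thimbles over paths joining }c\text{ to the critical values}\bigr)$, which exhibits the $m$ attached $d$-cells directly as thimbles.)
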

\begin{proof}
See the proof of Theorem A.5 of \cite{NOT}.
\end{proof}

\begin{thm}
Let $(X_0,0)$ be a $d$-dimensional IDS and let $X_A$ be a determinantal smoothing. 
Then $X_A$ has the homotopy type of CW-complex with $m_i(X_0,0)$ cells of dimension $i$, for $i=0,\dots,d$.
\end{thm}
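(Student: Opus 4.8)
The plan is to prove the theorem by induction on the dimension $d$, using Lemma \ref{tipo kiumars} as the inductive step that peels off one dimension at a time by slicing with a generic linear function. The key idea is that the polar multiplicity $m_i(X_0,0)$ is defined via a generic linear projection $p:\C^N\to\C^{d-i+1}$, and slicing $X_A$ with a generic hyperplane (the fibre of a generic linear function) should reduce the problem for $X_A$ of dimension $d$ to a problem for a determinantal slice of dimension $d-1$, with the polar multiplicities shifting accordingly.

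First I would set up the induction. The base case $d=0$ is immediate: a $0$-dimensional smoothing $X_A$ is a finite set of $m_0(X_0,0)$ reduced points, giving $m_0$ cells of dimension $0$. For the inductive step, I would take $g=p:X_A\to\C$ to be a generic linear function and apply Lemma \ref{tipo kiumars} with $M=X_A\cap B_{\epsilon'}$. The lemma gives that $X_A$ is obtained from the generic fibre $g^{-1}(c)\cap M$ by attaching $m=\#S(g|_M)$ cells of dimension $d$. By the definition of the top polar multiplicity recalled in the excerpt, $m_d(X_0,0)=\#S(p|_{X_A})$ for a generic linear function $p$, so the number of top-dimensional cells attached is exactly $m_d(X_0,0)$.

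Next I would identify the generic fibre $g^{-1}(c)\cap M$ with a determinantal smoothing of a lower-dimensional IDS. The fibre $g^{-1}(c)=H$ is a generic affine hyperplane, so $X_A\cap H$ is the intersection of the determinantal variety with a generic hyperplane; this is the determinantal smoothing of the hyperplane section $(X_0\cap H',0)$, which is again an IDS, now of dimension $d-1$ defined by the restriction of $f$ to $H'\cong\C^{N-1}$. By the inductive hypothesis, $g^{-1}(c)\cap M$ has the homotopy type of a CW-complex with $m_i(X_0\cap H',0)$ cells of dimension $i$ for $i=0,\dots,d-1$. The crucial point I must verify is that the polar multiplicities of the generic hyperplane section agree with those of the original germ, i.e. $m_i(X_0\cap H',0)=m_i(X_0,0)$ for $i=0,\dots,d-1$. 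This should follow from the behaviour of polar varieties under generic hyperplane sections, which is standard in the L\^e--Teissier theory of polar multiplicities; the generic polar variety of the slice is the slice of the generic polar variety. Combining the inductive hypothesis with the single attachment of $m_d(X_0,0)$ cells of dimension $d$ then yields the full cell structure.

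The main obstacle I anticipate is precisely this compatibility of polar multiplicities under hyperplane section, and more delicately, ensuring that the hyperplane section $(X_0\cap H',0)$ is itself an IDS of the expected dimension so that the inductive hypothesis applies and so that $m_d$ of the slice is well-defined in the determinantal sense. One must check that a generic hyperplane is transverse enough that the rank condition $\mbox{rank}(f(x))=s-1$ is preserved on the smooth locus and that the dimension drops by exactly one; this is a genericity argument relying on the good behaviour established in \cite{NOT}. A secondary technical point is matching the two genericity conditions: the function $g$ in Lemma \ref{tipo kiumars} must simultaneously be a Morse function whose critical point count computes $m_d(X_0,0)$ and whose generic fibre realizes the correct hyperplane section, so I would choose $p$ in the common dense open set where both conditions hold.
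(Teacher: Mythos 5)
Your proposal follows essentially the same route as the paper: induction on $d$ with base case $m_0(X_0,0)=\dim_\C\mathcal O_{X_0,0}$, Lemma \ref{tipo kiumars} applied to a generic linear function to attach $\#S(p|_{X_A})=m_d(X_0,0)$ top cells, identification of the generic fibre with the smoothing of the $(d-1)$-dimensional IDS $(X_0\cap p^{-1}(0),0)$, and the L\^e--Teissier invariance $m_i(X_0,0)=m_i(X_0\cap p^{-1}(0),0)$ for $i\le d-1$. The genericity and transversality issues you flag are real but are exactly the points the paper also delegates to \cite{LT} and \cite{NOT}.
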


\begin{proof}
Let's proceed by finite induction over $d$. If $d=0$, $X_A$ is a finite set with $\chi(X_A)$ points. By \cite[3.5]{NOT},
$$\chi(X_A)=\nu(X_0,0)+1=\dim_\C\mathcal O_{X_0,0}=m_0(X_0,0).$$

Assume that the result is true for any IDS with dimension $d-1$. We choose a generic linear map $p:\C^N\to\C$. Then by Lemma \ref{tipo kiumars},
$$
X_A\cong(p^{-1}(c)\cap X_A) \, \mbox{with $\#S(p|_{X_A})$ $d$-cells attached.}
$$
We have that $p^{-1}(c)\cap X_A$ is the smoothing of the $(d-1)$-dimensional IDS $(X_0\cap p^{-1}(0),0)$. By induction hypothesis, $p^{-1}(c)\cap X_A$ has a CW-complex structure with $m_i(X_0\cap p^{-1}(0),0)$ cells of dimension $i$, for $i=0,\dots,d-1$. But it follows from \cite{LT} that $m_i(X_0,0)=m_i(X_0\cap p^{-1}(0),0)$ for $i=0,\dots,d-1$, and $m_d(X_0,0)=\#S(p|_{X_A})$, by definition of the $d$-th polar multiplicity.  
\end{proof}

\begin{cor}
If $(X_0,0)$ is a $d$-dimensional IDS, then
\begin{equation*}
\nu(X_0,0)=(-1)^d(\sum_{i=0}^d(-1)^im_i(X_0,0)-1).
\end{equation*}
\end{cor}

\begin{proof}
By definition $\nu(X_0,0)=(-1)^d(\chi(X_A)-1)$, then
\begin{eqnarray*}
\nu(X_0,0)&=&(-1)^d(\chi(X_A)-1)\\
&=&(-1)^d(\sum_{i=0}^d(-1)^im_i(X_0,0)-1).
\end{eqnarray*}
\end{proof}

\begin{exam} {\rm Let $(X_0,0)$ be the determinantal surface in $(\C^4,0)$ given by the $2\times 2$ minors of the matrix:
\[
\left(
\begin{array}{ccc}
x  & y  & z  \\
y  & z  & w  
\end{array}
\right)
\]
We showed in \cite{NOT} that $m_0(X_0,0)=3$, $m_1(X_0,0)=4$ and $m_2(X_0,0)=3$. Thus, any determinantal smoothing $X_A$ has the homotopy type of a 2-dimensional CW-complex with 3 0-cells, 4 1-cells and 3 2-cells.}
\end{exam}

Another interesting topological property is that any determinantal smoothing is always connected (unless it is 0-dimensional). 

\begin{lem}\label{XAconexo}
If $(X_0,0)$ is an IDS of dimension $d\ge 1$, then $X_A$ is connected.
\end{lem}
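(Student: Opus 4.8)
The natural strategy is to induct on $d$, using Lemma \ref{tipo kiumars} to compare $X_A$ with a generic hyperplane slice, exactly as in the proof of the preceding Theorem. The key point is that the number of path components is governed only by how cells are attached, and that top-dimensional cells of dimension at least $2$ never change it. So I expect the real content to sit entirely in the lowest-dimensional case.

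For the inductive step, assume $d\ge 2$. Choosing a generic linear $p:\C^N\to\C$, Lemma \ref{tipo kiumars} gives that $X_A$ is homotopy equivalent to $p^{-1}(c)\cap X_A$ with $\#S(p|_{X_A})$ cells of dimension $d$ attached. Since $d\ge 2$, every attaching map $S^{d-1}\to p^{-1}(c)\cap X_A$ has connected source, so each new cell lands in a single path component, and the attachments neither merge nor create components; hence $\pi_0(X_A)\cong\pi_0(p^{-1}(c)\cap X_A)$. As observed in the proof of the Theorem, $p^{-1}(c)\cap X_A$ is a determinantal smoothing of the $(d-1)$-dimensional IDS $(X_0\cap p^{-1}(0),0)$, so it is connected by the induction hypothesis (here $d-1\ge 1$), and therefore so is $X_A$.

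This reduces everything to the base case $d=1$, which I expect to be the genuine obstacle. When $d=1$ the slice $p^{-1}(c)\cap X_A$ is $0$-dimensional and the attached cells are $1$-cells, which \emph{can} merge components; since the CW-model of $X_A$ carries $m_0(X_0,0)$ cells of dimension $0$, typically more than one, connectedness cannot be read off from the cell structure alone. For $d=1$, however, $X_A$ is a smooth curve which is the Milnor fibre of a smoothing of the reduced curve singularity $(X_0,0)$, and its connectedness is the classical fact that the Milnor fibre of a smoothing of a reduced, positive-dimensional isolated singularity is connected; for curves this is available from the theory developed in \cite{BG}. I would finish the base case by invoking this result. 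Alternatively one can work on the total space $\mathcal X=F^{-1}(M_{m,n}^s)$ of a $1$-parameter smoothing, a surface with at most an isolated singularity at the origin, and deduce connectedness of the generic fibre from the irreducibility of $(X_0,0)$ together with the monodromy of the associated Milnor fibration, though this route requires a more careful analysis of the $\pi_1$-action on the components of the fibre.
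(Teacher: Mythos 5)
Your proof is correct, but it takes a genuinely different route from the paper's for $d\ge 2$. The paper argues directly in one step: by \cite{St}, the map $\pi_i(\partial \overline{X_A})\to\pi_i(\overline{X_A})$ is surjective for $i=d-1$ and an isomorphism for $i\le d-2$; since an IDS of dimension $d\ge2$ is normal, hence irreducible, its link $\partial\overline{X_0}\cong\partial\overline{X_A}$ is connected, and connectedness of $X_A$ follows at once. You instead induct on $d$ via generic hyperplane slicing and the cell-attachment description of Lemma \ref{tipo kiumars}, using that attaching cells of dimension $\ge2$ preserves $\pi_0$, thereby pushing everything down to the curve case. Both arguments bottom out in the same place: for $d=1$ the paper, like you, invokes Theorem 4.2.2 of \cite{BG} for connectedness of the smoothing of a reduced curve singularity. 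Your inductive step is sound, granting (as the paper itself does in the proof of the preceding Theorem) that $p^{-1}(c)\cap X_A$ is a determinantal smoothing of the $(d-1)$-dimensional IDS $(X_0\cap p^{-1}(0),0)$; your identification of $d=1$ as the only case with real content is accurate. The trade-off: your argument is more self-contained at the level of cell structure and avoids normality, while the paper's link-versus-smoothing comparison is shorter and yields higher-connectivity information that is reused later (e.g.\ in the proof of Lemma \ref{lema1}, where connectedness of $U_1\cap V$ is deduced from the same source).
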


\begin{proof}
From the long exact homotopy sequence of the pair $(\overline{X_A},\partial \overline{X_A})$, the canonical mapping
\begin{equation*}
\pi_i(\partial \overline{X_A})\to \pi_i(\overline{X_A})
\end{equation*}
is surjective if $i=d-1$ and is an isomorphism if $i\leq d-2$ (see \cite{St}, proof of Theorem 1). Moreover, we have $\pi_i(\partial \overline{X_0})=\pi_i(\partial \overline{X_A})$ and $\pi_i(\overline{X_A})=\pi_i(X_A)$.

If $d\geq 2$, $(X_0,0)$ is normal, therefore irreducible and $\partial \overline{X_0}$ is connected. Hence, $\pi_0(X_A)\cong\pi_0(\partial \overline{X_0})$ and $X_A$ is connected. If $d=1$, $X_A$ is connected by Theorem 4.2.2 of \cite{BG}.
\end{proof}

It was shown by Milnor  \cite{M}, that the Milnor fibre of a $d$-dimensional isolated hypersurface singularity has the homotopy type of a bouquet of spheres of dimension $d$. This was generalized later by Hamm \cite{Hamm} for ICIS. For the purpose of the next section, it would be interesting to know whether this is also true for general IDS.

When $d=1$, this is always true, since any connected 1-dimensional CW-complex has the homotopy type of a bouquet of 1-spheres. For $d=2$, we know from \cite{St} that $\beta_1(X_A)=0$, but this is not enough to ensure that $X_A$ has the homotopy type of a bouquet of 2-spheres, since we should need that $X_A$ is simply connected. We do not know at this moment if this is true or not for a 2-dimensional IDS. For $d\ge 3$, the result is definitely false in general: Damon and Pike \cite{DP} give examples of 3-dimensional IDS for which $\beta_2(X_A)\ne0$.

In general, we will use the following well known criterion for a $d$-dimensional CW-complex to be a bouquet of spheres of dimension $d$. The proof follows easily by using Hurewicz and Whitehead theorems (see also the proof of \cite[6.5]{M}).

\begin{lem}
Let $X$ be a connected $d$-dimensional CW-complex, with $d\ge2$. The following statements are equivalent:
\begin{enumerate}
  \item $H_2(X)=\dots=H_{d-1}(X)=\pi_1(X)=1$;
  \item $X$ is $(d-1)$-connected;
  \item $X$ has the homotopy type of a bouquet of spheres of dimension $d$.
\end{enumerate}
\end{lem}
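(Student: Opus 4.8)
The plan is to prove the two equivalences $(1)\Leftrightarrow(2)$ and $(2)\Leftrightarrow(3)$ separately, relying throughout on the Hurewicz and Whitehead theorems, so that the whole argument reduces to bookkeeping with these two tools.

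For $(1)\Leftrightarrow(2)$, first I would recall that $(d-1)$-connectedness means exactly $\pi_1(X)=\dots=\pi_{d-1}(X)=1$. The implication $(2)\Rightarrow(1)$ is then immediate from the Hurewicz theorem, which gives $H_i(X)=0$ for $1\le i\le d-1$, killing in particular $H_2(X),\dots,H_{d-1}(X)$, while $\pi_1(X)=1$ holds by hypothesis. For the converse $(1)\Rightarrow(2)$, I would argue inductively: since $\pi_1(X)=1$, the space $X$ is simply connected, so Hurewicz applies in the first nontrivial degree. From $H_2(X)=1$ one gets $\pi_2(X)=1$; feeding this back, from $H_3(X)=1$ one gets $\pi_3(X)=1$, and so on up to $\pi_{d-1}(X)=1$. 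Thus $X$ is $(d-1)$-connected.

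For $(2)\Leftrightarrow(3)$, the direction $(3)\Rightarrow(2)$ is the easy one: a bouquet of $d$-spheres is a wedge of $(d-1)$-connected CW-complexes, hence itself $(d-1)$-connected, and this property is a homotopy invariant. The substantive direction is $(2)\Rightarrow(3)$. Assuming $X$ is $(d-1)$-connected, the Hurewicz theorem yields an isomorphism $\pi_d(X)\cong H_d(X)$. Since $X$ is a $d$-dimensional CW-complex, the cellular chain complex has no chains in degree $d+1$, so $H_d(X)$ is a subgroup of the free abelian group of cellular $d$-chains, hence free abelian; let $\{b_j\}_{j\in J}$ be a basis. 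I would then represent each $b_j$, via the Hurewicz isomorphism, by a map $S^d\to X$, and assemble these into a single map $\varphi:\bigvee_{j\in J}S^d\to X$. By construction $\varphi$ induces an isomorphism on $H_d$; since both source and target are $(d-1)$-connected and at most $d$-dimensional, all other reduced homology groups vanish on both sides, so $\varphi$ is an isomorphism on all homology. Both spaces being simply connected, the homology form of Whitehead's theorem then upgrades $\varphi$ to a homotopy equivalence, giving $(3)$.

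The main obstacle is the bookkeeping in $(2)\Rightarrow(3)$: one must verify that $\varphi$ induces a genuine isomorphism, and not merely a surjection or injection, on $H_d$, which is exactly the point of choosing the classes so that their images form the basis $\{b_j\}$ under Hurewicz, and one must confirm that the hypotheses of the homology Whitehead theorem, namely simple connectivity of both $\bigvee_{j\in J}S^d$ and $X$, are truly in force. Once these two points are secured, the remainder of the argument is formal.
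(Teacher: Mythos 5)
Your proposal is correct and follows exactly the route the paper intends: the paper gives no written proof beyond citing the Hurewicz and Whitehead theorems and Milnor's Lemma 6.5, and your argument (Hurewicz to pass between homotopy and homology in low degrees, then building $\varphi:\bigvee S^d\to X$ from a free basis of $H_d(X)$ and applying the homology Whitehead theorem) is precisely that standard argument, carried out with the right care about freeness of $H_d(X)$ and simple connectivity of both spaces.
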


Finally, we deduce the following direct consequence of the previous lemma and the results of \cite{St}.

\begin{cor}
If $(X_0,0)$ is a $d$-dimensional IDS in $\C^{d+k}$ with $k\leq d-1$ and such that $H_{d-k+1}(X_A)=\dots=H_{d-1}(X_A)=0$, then $X_A$ has the homotopy type of a bouquet of $\nu(X_0,0)$ spheres of dimension $d$.
\end{cor}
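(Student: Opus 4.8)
The plan is to reduce the statement to the bouquet criterion of the preceding lemma. Observe first that the codimension $k=(m-s+1)(n-s+1)$ of an IDS is always at least $1$, so the hypothesis $k\le d-1$ forces $d\ge 2$ and the criterion is applicable. By that lemma it then suffices to check that $X_A$ is connected, that $\pi_1(X_A)=1$, and that $H_i(X_A)=0$ for $2\le i\le d-1$; the number of spheres will afterwards drop out of an Euler characteristic computation.

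The first and main step is to feed in the connectivity estimates of \cite{St}. The relevant fact is that a determinantal smoothing $X_A$ of a $d$-dimensional IDS in $\C^{d+k}$ is $(d-k)$-connected --- this is the codimension-$k$ sharpening of the Lefschetz/Stein type statement already exploited in the proof of Lemma \ref{XAconexo}, where the map $\pi_i(\partial\overline{X_A})\to\pi_i(\overline{X_A})$ was controlled. Since $d-k\ge 1$ by hypothesis, this yields at once $\pi_1(X_A)=1$, and hence, by the Hurewicz theorem, $H_i(X_A)=0$ for all $1\le i\le d-k$.

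Next I would combine this with the hypothesis. The assumed vanishing $H_{d-k+1}(X_A)=\dots=H_{d-1}(X_A)=0$ is calibrated to cover exactly the homology groups left undetermined by the connectivity estimate, so the two together give $H_i(X_A)=0$ for every $2\le i\le d-1$, as well as $\pi_1(X_A)=1$. Together with the connectedness of $X_A$ from Lemma \ref{XAconexo}, the bouquet criterion now applies and shows that $X_A$ has the homotopy type of a bouquet of spheres of dimension $d$.

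Finally, to pin down the number of spheres I would compute Euler characteristics: if $X_A$ is homotopy equivalent to a wedge of $r$ copies of $S^d$, then $\chi(X_A)=1+(-1)^d r$, so that $r=(-1)^d(\chi(X_A)-1)=\nu(X_0,0)$ by the very definition of the vanishing Euler characteristic. The crux of the whole argument is the first step: everything depends on having the sharp $(d-k)$-connectedness of $X_A$ from \cite{St}, and on the hypothesis filling precisely the complementary homological range $d-k+1,\dots,d-1$; once that connectivity input is secured, the remaining deductions are purely formal.
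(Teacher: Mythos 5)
Your argument is exactly the one the paper intends: the corollary is stated there with no written proof, only as a ``direct consequence of the previous lemma and the results of \cite{St}'', and your write-up supplies precisely that deduction --- the connectivity of $X_A$ in the range $\leq d-k$ coming from \cite{St}, the hypothesis covering the complementary range $d-k+1,\dots,d-1$, the bouquet criterion of the preceding lemma (applicable since $k\geq 1$ forces $d\geq 2$), and the count of spheres via $\nu(X_0,0)=(-1)^d(\chi(X_A)-1)$. You also correctly isolate the crux, namely that the whole statement rests on the precise connectivity bound imported from \cite{St}, which is the same input the authors themselves rely on.
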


\section{The topology of a determinantal deformation}

We consider here a determinantal deformation $(\mathcal X,0)$ of the IDS $(X_0,0)$. If the deformation is given by $(\mathcal X,0)=F^{-1}(M_{m,n}^s)$, we fix a representative $F:B\times D\to M_{m,n}$, where $B\subset\C^N$ and $D\subset\C$ are small enough open balls centered at the origin. We set the following notation for each $t\in D$ and $A\in M_{m,n}$:
\begin{itemize}
\item $f_{t}:B\rightarrow M_{m,n}$ the map defined by $f_t(x)=F(x,t)$ and $X_t=f_t^{-1}(M_{m,n}^s)$;
\item $f_{t,A}:B\rightarrow M_{m,n}$ the map defined by $f_{t,A}(x)=f_t(x)+A$ and $X_{t,A}=f_{t,A}^{-1}(M_{m,n}^s)$.
\end{itemize}

\begin{lem}\label{smoothing} The subset $W\subset D\times M_{m,n}$ of pairs $(t,A)$ such that $X_{t,A}$ is smooth and  $\rank f_{t,A}(x)=s-1$ for any $x\in X_{t,A}$ is non-empty Zariski open. Moreover, the topological type of $X_{t,A}$ does not depend on $(t,A)$ in $W$.
\end{lem}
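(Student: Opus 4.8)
The plan is to separate the statement into a genericity part (that $W$ is non-empty and Zariski open) and a topological part (that the topological type is constant on $W$). For the genericity, the key observation I would exploit is that the map
\[
\Phi\colon B\times D\times M_{m,n}\to M_{m,n},\qquad \Phi(x,t,A)=F(x,t)+A,
\]
is a submersion, since its partial differential with respect to $A$ is already the identity. Hence $\Phi$ is transverse to every stratum $V_r=\{M\in M_{m,n}:\rank M=r\}$ of the rank stratification of $M_{m,n}^s$, and each pull-back $\Phi^{-1}(V_r)$ is smooth; together they stratify $\Phi^{-1}(M_{m,n}^s)$. Writing $q\colon\Phi^{-1}(M_{m,n}^s)\to D\times M_{m,n}$ for the projection $(x,t,A)\mapsto(t,A)$, the two conditions defining $W$ translate into saying that $(t,A)$ is a regular value of $q|_{\Phi^{-1}(V_{s-1})}$ and that $(t,A)$ avoids the images $q(\Phi^{-1}(V_r))$ for $r\le s-2$. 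This is exactly the situation analysed in \cite{NOT} for a fixed $t$, now imposed on the larger parameter space; the same argument applies, the only dimension count being that $\Phi^{-1}(V_r)$ has codimension $(m-r)(n-r)$, so that the IDS hypothesis $N<(m-s+2)(n-s+2)$ (or $s=1$) forces $\Phi^{-1}(V_{s-2})$, and hence all deeper strata, to project to a proper subset. This shows the complement of $W$ is a proper analytic subset, so $W$ is Zariski open. Non-emptiness comes for free by restricting to $t=0$, where $f_{0,A}=f_A$ and $X_{0,A}=X_A$: the slice $\{0\}\times M_{m,n}$ meets $W$ precisely in $\{0\}\times W_0$, where $W_0$ is the non-empty Zariski open set from \cite{NOT} used to define the special determinantal smoothing.

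For the topological statement I would first record that $W$ is connected: it is a Zariski open subset of the smooth irreducible manifold $D\times M_{m,n}$, whose complement is a proper analytic subset of complex codimension at least one, and removing such a set from a connected complex manifold leaves a connected and (being open) locally path-connected set. It therefore suffices to prove that the topological type of $X_{t,A}$ is locally constant on $W$. Over $W$ every $X_{t,A}$ is smooth and, by the regular-value characterization above, the projection $q$ is a submersion there; thus the total space $\mathcal W=\Phi^{-1}(V_{s-1})\cap(B\times W)$ is a smooth manifold and $p=q|_{\mathcal W}\colon\mathcal W\to W$ is a submersion with fibres $X_{t,A}$.

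To pass from this submersion to a locally trivial fibration I would invoke Ehresmann's theorem after making the projection proper. Fixing $(t_0,A_0)\in W$ and applying Milnor's conic structure argument uniformly in the parameter, I would produce a radius $\epsilon'>0$ and a neighbourhood $U\subset W$ of $(t_0,A_0)$ such that $X_{t,A}$ meets every sphere $S_{\epsilon''}$, $0<\epsilon''\le\epsilon'$, transversally for all $(t,A)\in U$. Then $p\colon\mathcal W\cap(\overline B_{\epsilon'}\times U)\to U$ is a proper submersion of manifolds with boundary, hence, after shrinking $U$, a trivial fibration; combined with the conic structure this yields homeomorphic representatives $X_{t,A}\cap B_{\epsilon'}$ for all $(t,A)\in U$. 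The topological type is therefore locally constant, and connectedness of $W$ upgrades this to global constancy.

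The main obstacle I expect is precisely this last step: one must guarantee that the boundary transversality, and with it the properness required for Ehresmann, can be arranged uniformly on a neighbourhood of each point of $W$ rather than merely pointwise, since it is this uniformity that lets the isotopy/fibration argument run in families. A secondary delicate point is the genuine Zariski (rather than merely measure-theoretic) openness of $W$, which rests on expressing the transversality and stratum-avoidance conditions as the non-vanishing of holomorphic functions, exactly as in \cite{NOT}.
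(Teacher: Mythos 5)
Your proposal is correct and follows essentially the same route as the paper, whose entire proof is the instruction to ``follow the arguments of Theorem 3.4 of \cite{NOT}'': you are reconstructing exactly that argument (transversality of $(x,t,A)\mapsto F(x,t)+A$ to the rank stratification, the codimension count $N<(m-s+2)(n-s+2)$ to discard deeper strata, connectedness of the Zariski open set $W$, and an Ehresmann-type fibration argument for local constancy of the topological type) with the ambient parameter space enlarged from $M_{m,n}$ to $D\times M_{m,n}$. The two delicate points you flag (uniform boundary transversality for properness, and genuine Zariski openness) are precisely the ones handled in \cite{NOT}, so nothing further is needed.
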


\begin{proof} Just follow the arguments of \cite[Theorem 3.4]{NOT}.
\end{proof}

%

We fix some notations which will be useful for some of the results of this section.
We choose the ball $B_\epsilon$ such that the only singular point of $X_0$ in $B_\epsilon$ is the origin. Now we fix a generic matrix $A\in M_{m,n}$ such that $X_A$ is a determinantal smoothing of $(X_0,0)$. Moreover, since the subset $W$ of Lemma \ref{smoothing} is open, we can shrink the disk $D$ if necessary, in such a way that for any $t\in D$, $(t,A)\in W$. Hence, $X_{t,A}$ is smooth, $\rank f_{t,A}(x)=s-1$ for any $x\in X_{t,A}$ and $X_{t,A}$ is homeomorphic to $X_A$. 

Fix $t\in D$ and let $\{x_1,\dots,x_{k}\}$ be the singular set of $X_t\cap B_\epsilon$. For each $i=1,\dots,k$, choose an open disc $D_i\subset B_\epsilon$ centered at $x_i$ and such that $x_i$ is the only critical point of $X_t\cap D_i$ (see fig. \ref{deform}). It follows that $X_{t,A}\cap D_i$ is a determinantal smoothing of $(X_t,x_i)$. Moreover, we can choose a continuous map $r:X_{t,A}\to X_t$ such that $r(X_{t,A}\cap D_i)=\{x_i\}$ and $r$ is a homeomorphism outside $D_i$.

Finally, we set:
\begin{eqnarray*}
U&=&X_t\cap(\cup_{i=1}^{k}\overline{D}_i),\\ 
V&=&X_t\cap(B_\epsilon-\cup_{i=1}^{k_t}D_i^t),\\
\tilde{U}&=&X_{t,A}\cap(\cup_{i=1}^{k}\overline{D}_i),\\ 
\tilde{V}&=&X_{t,A}\cap(B_\epsilon-\cup_{i=1}^{k}D_i).
\end{eqnarray*}

\begin{figure}[ht!]
\begin{center}
   \psfrag{B}{$B_\epsilon$}
  \psfrag{X0}{$X_0$}
\psfrag{Xt}{$X_t$}
 \psfrag{XtA}{$X_{t,A}$}
  \psfrag{x}{$0$}
 \psfrag{x1}{$x_1$}
\psfrag{x2}{$x_2$}
\psfrag{x3}{$x_3$}
 \psfrag{D1}{$D_1$}
\psfrag{D2}{$D_2$}
\psfrag{D3}{$D_3$}
\includegraphics[height=7cm]{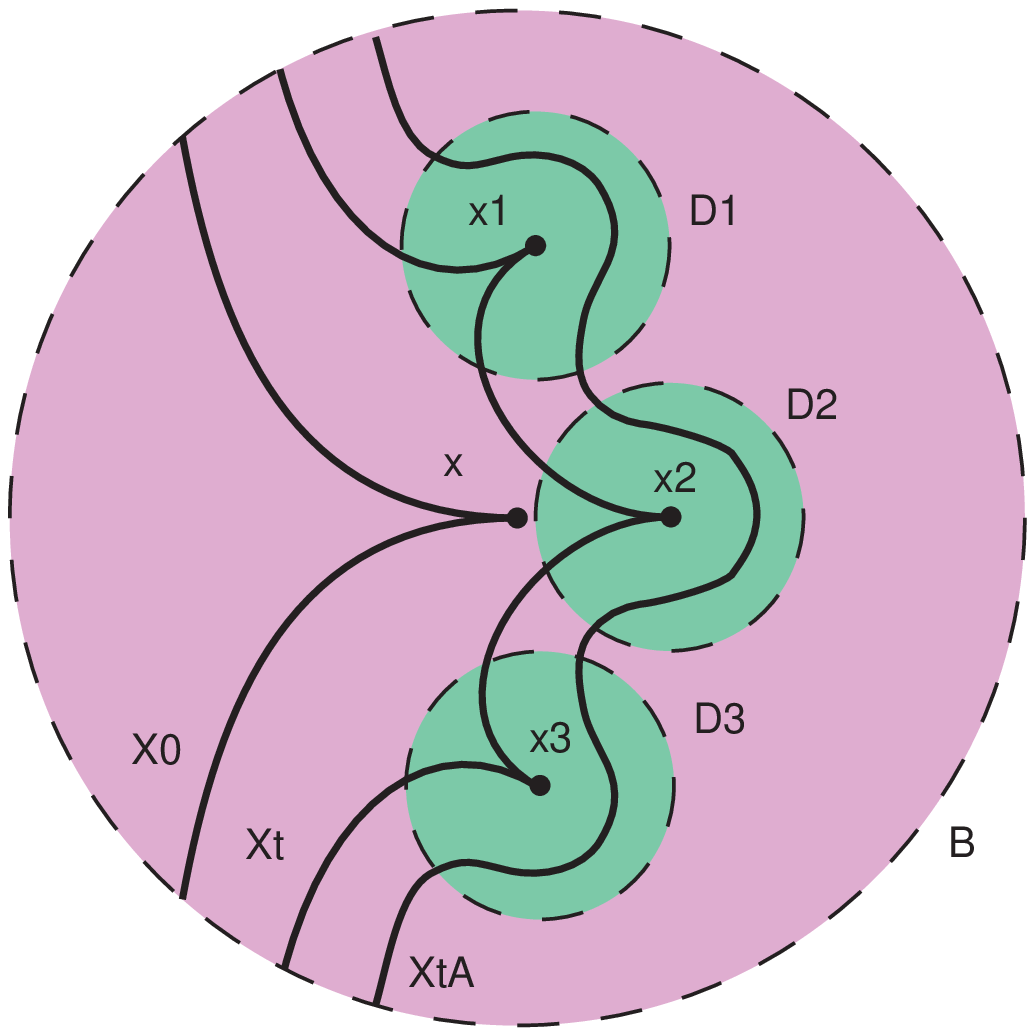}
\caption{}\label{deform}
\end{center}
\end{figure}

The following theorem states that the difference between the global invariants in the fibres of a deformation measures exactly the vanishing Euler characteristic of the deformation. This fact is well known for curve singularities (see \cite{BG}), for hypersurfaces (see \cite{Si}) and probably for ICIS, although we could not find any good reference in the literature. We prove it here for general IDS.

\begin{thm}\label{chiXt} Let $(X_0,0)$ be a $d$-dimensional IDS and let $X_t$ be any determinantal deformation. Then,
\begin{equation*}
\nu(X_0,0)-\sum_{x\in S(X_t)}\nu(X_t,x)=(-1)^d(\chi(X_t)-1)
\end{equation*} 
\end{thm}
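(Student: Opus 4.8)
The plan is to compute the Euler characteristic of $X_{t,A}$, the smoothing of the deformation, in two different ways and compare them. The key geometric input is the setup already established before the statement: we have the continuous retraction $r:X_{t,A}\to X_t$ collapsing each local smoothing $X_{t,A}\cap D_i$ to the singular point $x_i$, together with the decomposition $X_{t,A}=\tilde U\cup\tilde V$, where $\tilde V$ is homeomorphic (via $r$) to the smooth part $V=X_t\cap(B_\epsilon-\cup_i D_i)$ and each piece $\tilde U\cap D_i$ is a determinantal smoothing of the IDS $(X_t,x_i)$.

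The first step is to apply the inclusion–exclusion (Mayer–Vietoris) formula for the Euler characteristic to the cover $X_{t,A}=\tilde U\cup\tilde V$, giving
\begin{equation*}
\chi(X_{t,A})=\chi(\tilde U)+\chi(\tilde V)-\chi(\tilde U\cap\tilde V).
\end{equation*}
Here $\chi(\tilde U)=\sum_{i=1}^k\chi(X_{t,A}\cap D_i)$, and since each $X_{t,A}\cap D_i$ is a determinantal smoothing of $(X_t,x_i)$, the defining relation $\nu(X_t,x_i)=(-1)^d(\chi(X_{t,A}\cap D_i)-1)$ lets me write $\chi(\tilde U)=k+(-1)^d\sum_{i}\nu(X_t,x_i)$. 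The intersection $\tilde U\cap\tilde V$ is the disjoint union of the $k$ ``collars'' $X_{t,A}\cap\partial D_i$, each of which is the link-type boundary (a smooth odd-dimensional manifold fibering over the Milnor-type boundary); I would argue these have vanishing Euler characteristic, so $\chi(\tilde U\cap\tilde V)=0$.

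The second step is to identify $\chi(\tilde V)$ with an Euler characteristic attached to $X_t$. Since $r$ restricts to a homeomorphism $\tilde V\to V$, we have $\chi(\tilde V)=\chi(V)$; and $X_t=V\cup U$ with $U$ contractible onto $\{x_1,\dots,x_k\}$ and $U\cap V$ again the collars with zero Euler characteristic, so $\chi(X_t)=\chi(V)+k$. Substituting everything into the first-step formula and using $\chi(X_{t,A})=\chi(X_A)=\nu(X_0,0)(-1)^{d}+1$ (the defining equation for the smoothing of the central fibre, valid because $X_{t,A}\cong X_A$ by Lemma \ref{smoothing}) yields
\begin{equation*}
(-1)^d\nu(X_0,0)+1=k+(-1)^d\sum_{x\in S(X_t)}\nu(X_t,x)+\chi(X_t)-k,
\end{equation*}
which rearranges to exactly the claimed identity after multiplying the relevant pieces by $(-1)^d$.

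The main obstacle is the careful bookkeeping of the boundary/collar terms — verifying that $\chi(\tilde U\cap\tilde V)=0$ and $\chi(U\cap V)=0$, i.e.\ that the local links contribute nothing. This rests on the fact that the boundaries $\partial(X_{t,A}\cap D_i)$ are compact smooth manifolds of odd real dimension $2d-1$ (for $d\ge1$), whose Euler characteristic vanishes by Poincaré duality; I would need to confirm that the IDS structure keeps these boundaries smooth and of the expected dimension, which follows from the genericity of $A$ and the good-deformation hypotheses. A secondary subtlety is ensuring that $r$ can genuinely be chosen as a homeomorphism off the discs and that $\chi$ behaves well under these identifications; this is essentially the content of the figure and the retraction construction, so I would treat it as given and concentrate the effort on the collar-vanishing argument.
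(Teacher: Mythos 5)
Your proposal is correct and follows essentially the same route as the paper: both compute $\chi(X_{t,A})$ via the cover $\tilde U\cup\tilde V$, identify $\chi(\tilde V)=\chi(V)$ through $r$, use that the odd-dimensional links make $\chi(U\cap V)=\chi(\tilde U\cap\tilde V)=0$, and compare with $\chi(X_t)=\chi(V)+k$ and $\chi(X_{t,A})=\chi(X_A)$. The bookkeeping you flag as the main obstacle is handled in the paper exactly as you describe, so there is nothing further to add.
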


\begin{proof}
With the above notation we have for $X_t$:
$$
\chi(X_t)=\chi(U\cup V)=\chi(U)+\chi(V)-\chi(U\cap V)=\chi(V)+k,
$$
because each $D_i\cap X_t$ is contractible and $\chi(U\cap V)=0$ (since $U\cap V$ is a closed manifold of odd dimension).
 
Analogously, for $X_{t,A}$ we get:
$$
\chi(X_t)=\chi(\tilde U\cup \tilde V)=\chi(\tilde U)+\chi(\tilde V)-\chi(\tilde U\cap \tilde V)=\chi(\tilde V)+\sum_{i=1}^{k} \chi(X_{t,A}\cap D_i).
$$

On the other hand,
\begin{eqnarray*}
(-1)^d\nu(X_0,0)+1&=&\chi(X_A)\\
&=&\chi(X_{t,A})\\
&=&\chi(\tilde V)+\sum_{i=1}^{k} \chi(X_{t,A}\cap D_i)\\
&=&\chi(V)+\sum_{i=1}^{k}((-1)^d\nu(X_t,x_i)+1)\\
&=&\chi(X_t)-k+\sum_{i=1}^{k}(-1)^d\nu(X_t,x_i)+k\\
&=&\chi(X_t)+(-1)^d\sum_{i=1}^{k}\nu(X_t,x_i),
\end{eqnarray*}
which gives the desired equality.
\end{proof}

\begin{cor}
Let $X_t$ be a determinantal deformation of an IDS $(X_0,0)$. Then, the sum $\sum_{x\in S(X_t)}\nu(X_t,x)$ is constant on $t$ if and only if $\chi(X_t)=1$.
\end{cor}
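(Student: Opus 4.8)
The plan is to deduce this corollary directly from Theorem \ref{chiXt}, which is stated immediately above it, since the corollary is essentially an elementary algebraic reformulation of that theorem. First I would invoke Theorem \ref{chiXt} to write
\begin{equation*}
\nu(X_0,0)-\sum_{x\in S(X_t)}\nu(X_t,x)=(-1)^d(\chi(X_t)-1).
\end{equation*}
The left-hand side involves the term $\nu(X_0,0)$, which is a fixed number attached to the central fibre and hence does not depend on $t$. This is the observation that makes the whole argument work: the dependence on $t$ of the sum $\sum_{x\in S(X_t)}\nu(X_t,x)$ is controlled entirely by the dependence on $t$ of $\chi(X_t)$.

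Next I would argue the two implications. For the forward direction, suppose the sum $\sum_{x\in S(X_t)}\nu(X_t,x)$ is constant on $t$. Evaluating at $t=0$, where $S(X_0)=\{0\}$ and the only singular point contributes $\nu(X_0,0)$, the constant value of the sum must equal $\nu(X_0,0)$. Therefore the left-hand side of the displayed equality vanishes for every $t$, which forces $(-1)^d(\chi(X_t)-1)=0$, that is, $\chi(X_t)=1$. Conversely, if $\chi(X_t)=1$ for all $t$, then the right-hand side is identically zero, so the equality reduces to $\sum_{x\in S(X_t)}\nu(X_t,x)=\nu(X_0,0)$, and since the latter is independent of $t$ the sum is constant on $t$.

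The only subtle point, and the step I would handle with most care, is the identification of the constant value of the sum with $\nu(X_0,0)$ at $t=0$; this relies on the fact that the deformation is origin preserving so that $0\in S(X_0)$ and that $\nu(X_0,0)$ is precisely the contribution of the singular point of the central fibre. Once this is in place the corollary follows without any further geometric input, being a purely formal consequence of Theorem \ref{chiXt} together with the $t$-independence of $\nu(X_0,0)$.
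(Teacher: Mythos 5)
Your proposal is correct and is exactly the intended argument: the paper states this corollary without proof as an immediate consequence of Theorem \ref{chiXt}, and your deduction (evaluating the sum at $t=0$, where $S(X_0)=\{0\}$ forces the constant value to be $\nu(X_0,0)$, and then reading off the vanishing of the right-hand side) is the natural way to make that explicit. No gaps.
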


We recall that a deformation $(\mathcal X,0)$ of $(X_0,0)$ is topologically trivial if there exists a homeomorphism $\Phi:(\mathcal X,0)\to(X_0\times\C,0)$ which commutes with the projection, that is, $\pi\circ\Phi=\pi$, where $\pi(x,t)=t$. In particular, this implies that $X_t$ is homeomorphic to $X_0$, for $t$ small enough. 
 
\begin{cor}
If $X_t$ is a topologically trivial determinantal deformation of an IDS $(X_0,0)$, then
\begin{equation*}
\sum_{x\in S(X_t)}\nu(X_t,x)=\nu(X_0,0).
\end{equation*}
\end{cor}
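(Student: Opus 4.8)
The plan is to reduce everything to Theorem \ref{chiXt}, whose right-hand side $(-1)^d(\chi(X_t)-1)$ vanishes precisely when $\chi(X_t)=1$. Thus the entire content of the corollary is the verification that topological triviality forces $\chi(X_t)=1$; once this is established, the stated equality is an immediate substitution.

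First I would exploit the defining property of topological triviality. The homeomorphism $\Phi:(\mathcal X,0)\to(X_0\times\C,0)$ satisfies $\pi\circ\Phi=\pi$, so it carries the fibre $X_t=\pi^{-1}(t)\cap\mathcal X$ onto $X_0\times\{t\}$. Restricting $\Phi$ to this fibre yields a homeomorphism $X_t\cong X_0$, where $X_0$ now denotes the small representative $X_0\cap B_\epsilon$; in particular $\chi(X_t)=\chi(X_0)$. (This is exactly the consequence of topological triviality already noted in the text preceding the statement.) Next I would compute $\chi(X_0)$: by the local conical structure of analytic germs, $X_0\cap B_\epsilon$ is homeomorphic to the cone over its link $X_0\cap S_\epsilon$ and is therefore contractible, so $\chi(X_0)=1$. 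Combining the two steps gives $\chi(X_t)=1$.

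Finally, I would substitute $\chi(X_t)=1$ into Theorem \ref{chiXt}:
$$
\nu(X_0,0)-\sum_{x\in S(X_t)}\nu(X_t,x)=(-1)^d(\chi(X_t)-1)=0,
$$
which is precisely the claimed identity. Equivalently, one may simply invoke the immediately preceding corollary, whose hypothesis $\chi(X_t)=1$ has now been checked. There is no serious obstacle: the only point requiring genuine care is the justification that the fibre $X_t$ of a topologically trivial deformation is homeomorphic to the contractible representative $X_0$, forcing $\chi(X_t)=1$; everything else is a direct application of Theorem \ref{chiXt}.
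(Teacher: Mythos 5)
Your proposal is correct and follows exactly the route the paper intends: topological triviality gives $X_t\cong X_0$, the conical structure of the germ makes $X_0$ contractible so $\chi(X_t)=1$, and substituting into Theorem \ref{chiXt} (equivalently, invoking the preceding corollary) yields the equality. This matches the paper's (implicit) argument, which relies on the remark that $\Phi$ restricts to a homeomorphism of fibres.
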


\begin{rem}
We know from  \cite{M} that the Milnor number of a hypersurface with isolated singularity is a topological invariant, that is, if $(X_0,0)$ and $(X_0',0)$ are hypersurfaces with isolated singularity and have the same topological type (i.e., $(B_\epsilon,X_0)$ is homeomorphic to $(B_{\epsilon'},X_0')$), then $\mu(X_0,0)=\mu(X_0',0)$. However, this is not true in general for higher codimension. For instance, take $(X_0,0)\subset(\C^3,0)$ as the determinantal curve given by the $2\times 2$-minors of 
\[
\left(
\begin{array}{ccc}
x  & y  & z  \\
y  & z  & x^2  
\end{array}
\right)
\]
and take $(X_0',0)$ as the $x$-axis $y=z=0$. Both curves have the same topological type, since both are irreducible, $(X_0,0)$ is the curve parametrized by $t\mapsto(t^3,t^4,t^5)$ and $(X_0',0)$ is smooth. However, $\mu(X_0,0)=4$ and $\mu(X_0',0)=0$.
\end{rem}

In order to deduce some other consequences of Theorem \ref{chiXt}, we need some connection properties of the general fibre $X_t$ of a determinantal deformation of an IDS $(X_0,0)$. 

\begin{lem} \label{Xtconexo}
If $(X_0,0)$ is an IDS with dimension $d\ge1$ then any determinantal deformation $X_t$ is connected for $t$ small enough.
\end{lem}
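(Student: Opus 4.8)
The plan is to compare the number of connected components of the possibly singular fibre $X_t$ with that of the nearby smooth fibre $X_{t,A}$, exploiting the connectedness of determinantal smoothings established in Lemma \ref{XAconexo}, used both globally and locally. Throughout I keep the notation $U,V,\tilde U,\tilde V$ and the collapsing map $r:X_{t,A}\to X_t$ introduced before Theorem \ref{chiXt}, and I write $A_i:=X_t\cap\overline{D}_i$ and $\tilde A_i:=X_{t,A}\cap\overline{D}_i$ for the pieces of $U$ and $\tilde U$. First I would record the two connectedness inputs. Since $d\ge1$ and $X_{t,A}$ is homeomorphic to $X_A$, Lemma \ref{XAconexo} gives that $X_{t,A}$ is connected. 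Moreover, each $\tilde A_i$ is a determinantal smoothing of the IDS $(X_t,x_i)$, which again has dimension $d\ge1$, so Lemma \ref{XAconexo} applied locally shows every $\tilde A_i$ is connected; on the $X_t$ side each $A_i$ is contractible, hence connected.

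The key elementary observation is that collapsing a nonempty connected subspace to a point neither creates nor destroys connectedness: if $A\subset Z$ is connected, then $Z$ is connected if and only if $Z/A$ is, since the quotient map is a continuous surjection whose fibres are $A$ and singletons (all connected) and whose relevant decompositions are saturated. Applying this one disc at a time to the pairwise disjoint connected pieces $\tilde A_i$, I conclude that $X_{t,A}$ is connected if and only if the quotient $X_{t,A}/(\sqcup_i \tilde A_i)$ is; in the same way, $X_t$ is connected if and only if $X_t/(\sqcup_i A_i)$ is.

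Finally I would identify the two quotients. Collapsing each $\tilde A_i$ leaves $\tilde V$ with the links $X_{t,A}\cap\partial D_i$ coned off to points, while collapsing each $A_i$ leaves $V$ with the links $X_t\cap\partial D_i$ coned off. Since $r$ restricts to a homeomorphism $\tilde V\to V$ carrying each boundary link $X_{t,A}\cap\partial D_i$ onto $X_t\cap\partial D_i$, the two quotient spaces are homeomorphic. As $X_{t,A}$ is connected, so is $X_{t,A}/(\sqcup_i \tilde A_i)$, hence so is the homeomorphic space $X_t/(\sqcup_i A_i)$, and therefore $X_t$ is connected. In effect this shows that $X_t$ and $X_{t,A}$ have the same number of connected components.

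The step I expect to require the most care is the identification of the two quotient spaces: one must check that $r$ genuinely matches the attaching data, so that replacing each local smoothing $\tilde A_i$ by the corresponding contractible cone $A_i$ becomes a homeomorphism after the collapse. The hypothesis $d\ge1$ enters essentially only through Lemma \ref{XAconexo}, which supplies the connectedness of both the global and the local smoothings; for $d=0$ the links $X_t\cap\partial D_i$ are empty and the statement indeed fails.
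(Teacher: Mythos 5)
Your argument is correct, and it rests on exactly the same comparison the paper makes: the decomposition of $X_t$ and $X_{t,A}$ into the local pieces over the discs $D_i$ and the common complement $V\cong\tilde V$, with the two inputs being the connectedness of the global smoothing $X_{t,A}$ and of each local smoothing $X_{t,A}\cap\overline D_i$ (Lemma~\ref{XAconexo} applied globally and at each $(X_t,x_i)$). The difference is the vehicle: the paper runs this through the $H_0$ Mayer--Vietoris ladder for $(U,V)$ and $(\tilde U,\tilde V)$, observing that $\beta_0$ is an isomorphism and hence $\gamma_0:H_0(X_{t,A})\to H_0(X_t)$ is surjective, whereas you replace homology by the elementary observation that collapsing a nonempty connected closed subset to a point preserves the number of connected components, and then identify the two collapsed spaces via $r|_{\tilde V}$. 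Your version is more elementary and, usefully, makes explicit a point the paper compresses into the phrase ``$r$ is a bijection between the connected components of $\tilde U$ and $U$,'' namely that this bijection is only available because each $X_{t,A}\cap\overline D_i$ is itself connected by the local application of Lemma~\ref{XAconexo}; the homological route, on the other hand, is the one that generalizes to the higher-degree statements the paper needs later (Lemmas~\ref{beta1Xt} and~\ref{d-1conexo}). One cosmetic remark: collapsing $\tilde A_i$ to a point yields $\tilde V$ with each boundary link \emph{crushed to a point} rather than ``coned off,'' but this does not affect your identification of the two quotients or the conclusion.
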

\begin{proof}
We consider the Mayer-Vietoris sequences of the pairs $(U,V)$ in $X_{t}$ and $(\tilde U,\tilde V)$ in $X_{t,A}$ where the vertical arrows are induced by the continuous map $r:X_{t,A}\to X_t$: 
$$
\begin{CD}
\dots @>>> H_0(U\cap V) @>>> H_0({U})\oplus H_0({V})  @>>> H_{0}(X_t) @>>> 0\\
@.     @AA\alpha_0A   @AA\beta_0 A   @AA\gamma_0 A  @.\\
\dots @>>> H_0(\tilde{U}\cap \tilde{V}) @>>> H_0(\tilde{U})\oplus H_0(\tilde{V}) @>>> H_0(X_{t,A}) @>>> 0
\end{CD}
$$

Note that $\beta_0$ is an isomorphism, since $r$ is a homeomorphism between $\tilde V$ and $V$ and it is also a bijection between the connected components of $\tilde U$ and $U$. Then, $\gamma_0$ is an epimorphism by elementary calculatios. On the other hand, since $X_A\simeq X_{t,A}$ is connected by Lemma \ref{XAconexo}, we have that $X_t$ is also connected.
\end{proof}

\begin{lem}\label{beta1Xt}
If $(X_0,0)$ is an IDS with dimension $d\ge2$, then $\beta_1(X_t)=0$  for any determinantal deformation $X_t$ and for $t$ small enough.
\end{lem}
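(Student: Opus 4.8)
The plan is to run the same Mayer--Vietoris comparison as in the proof of Lemma~\ref{Xtconexo}, but one homological degree higher, and to close the argument with the five lemma. Since $\beta_1$ is the rank of $H_1$, I will work with rational coefficients throughout, so that $\beta_1(X_t)=0$ is equivalent to $H_1(X_t)=0$. With the notation fixed before Theorem~\ref{chiXt}, the relevant part of the ladder, with vertical maps induced by the continuous map $r:X_{t,A}\to X_t$, is
$$
\begin{CD}
H_1(U\cap V) @>>> H_1(U)\oplus H_1(V) @>>> H_1(X_t) @>>> H_0(U\cap V) @>>> H_0(U)\oplus H_0(V)\\
@AA\alpha_1A @AA\beta_1A @AA\gamma_1A @AA\alpha_0A @AA\beta_0A\\
H_1(\tilde U\cap\tilde V) @>>> H_1(\tilde U)\oplus H_1(\tilde V) @>>> H_1(X_{t,A}) @>>> H_0(\tilde U\cap\tilde V) @>>> H_0(\tilde U)\oplus H_0(\tilde V)
\end{CD}
$$
My aim is to show that $\gamma_1$ is surjective. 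Granting this, the lemma follows at once: $X_{t,A}$ is homeomorphic to the determinantal smoothing $X_A$, and since $d\ge2$ we have $H_1(X_{t,A})\cong H_1(X_A)=0$ by \cite{St}, so $H_1(X_t)=0$.

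Next I would identify the flanking vertical maps, reusing the observations of Lemma~\ref{Xtconexo}. As there, $r$ is a homeomorphism away from the discs $D_i$; hence it restricts to a homeomorphism of $\tilde U\cap\tilde V$ onto $U\cap V$ and of $\tilde V$ onto $V$, and it induces a bijection between the (connected) components of $\tilde U$ and those of $U$. This gives that $\alpha_0$ and $\beta_0$ are isomorphisms. For $\beta_1$ it suffices to note that each piece $X_t\cap\overline D_i$ is contractible by the local conic structure, so $H_1(U)=0$, while $r|_{\tilde V}$ induces an isomorphism $H_1(\tilde V)\to H_1(V)$; therefore $\beta_1$ is surjective. (The groups $H_1(\tilde U)$ play no role here.)

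With $\alpha_0$ and $\beta_0$ isomorphisms and $\beta_1$ surjective, the surjectivity half of the five lemma, applied to the two five-term exact sequences displayed above, shows that $\gamma_1$ is surjective; alternatively this is a short direct diagram chase. This completes the argument.

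The step I expect to require the most care is not the diagram chase itself but the input $H_1(X_A)=0$ for $d\ge2$: this is precisely where the dimension hypothesis enters, through \cite{St}, and it is exactly the hypothesis that fails for $d=1$, where $X_A$ is a bouquet of circles. Everything else is formal and parallels Lemma~\ref{Xtconexo}; in particular the contractibility of the local pieces $X_t\cap\overline D_i$ and the homeomorphism properties of $r$ are available verbatim from the setup.
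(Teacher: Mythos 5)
Your proof is correct and follows essentially the same route as the paper: the Mayer--Vietoris comparison of $(U,V)$ in $X_t$ with $(\tilde U,\tilde V)$ in $X_{t,A}$ via the collapse map $r$, concluding from $H_1(X_{t,A};\Q)\cong H_1(X_A;\Q)=0$ (Greuel--Steenbrink, using normality of $(X_0,0)$ for $d\ge2$). The only difference is in the bookkeeping: you obtain surjectivity of $\gamma_1$ by a four-lemma chase on the full ladder, whereas the paper first shows $H_1(V;\Q)\to H_1(X_t;\Q)$ is onto by observing that the links forming $U\cap V$ are connected (normality of the $(X_t,x_i)$) and then uses a single commutative square.
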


\begin{proof}
We consider the Mayer-Vietoris sequence of the pair $(U,V)$ in $X_t$ for the homology with rational coefficients:
\begin{align*}
&\begin{CD}
\dots@>>> H_1({U};\Q)\oplus H_1({V};\Q)  @>\phi_1 >>  H_{1}(X_t;\Q) @>\phi_2 >>
\end{CD}\\
&\begin{CD}
@>\phi_2 >> H_0(U\cap V;\Q) @>\phi_3 >>  H_0(U;\Q)\oplus H_0(V;\Q) @>>> \dots
\end{CD}
\end{align*}
Since $d\ge 2$, all the singularities $(X_t,x_i)$ are normal and hence, irreducible. Thus, $U\cap V$ has exactly $k$ connected components (one for each singular point) and the induced morphism $H_0(U\cap V;\Q)\to H_0(U;\Q)$ is an isomorphism. This implies that  $\phi_3$ is a monomorphism,  $\phi_2=0$ and hence, $\phi_1$ is an epimorphism. In fact, since $H_1({U};\Q)=0$, we can consider $\phi_1:H_1(V;\Q)\to H_1(X_t;\Q)$.

Thus, we have
$$
\begin{CD}
H_1({V};\Q)  @>\phi_1>> H_{1}(X_t;\Q)\\
@AA\beta_1 A   @AA\gamma_1 A  @.\\
H_1(\tilde{V};\Q) @>>> H_1(X_{t,A};\Q)
\end{CD}
$$
where $\phi_1$ is epimorphism and $\beta_1$ is isomorphism. Then, $\gamma_1$ is also epimorphism. But $(X_0,0)$ is normal and by \cite{St}, $H_1(X_{t,A};\Q)=H_1(X_A;\Q)=0$, therefore $H_1(X_t;\Q)=0$.
\end{proof}

As a consequence of the two previous lemmas and Theorem \ref{Xtconexo}, we can conclude that the vanishing Euler characteristic of $2$-dimensional IDS is upper semi-continuous.

\begin{cor} If $(X_0,0)$ is a $2$-dimensional IDS, then $\nu(X_t,x)\le \nu(X_0,0)$ for any determinantal deformation $X_t$ and for any $x\in S(X_t)$. 
\end{cor}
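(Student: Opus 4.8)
The plan is to read off the identity of Theorem~\ref{chiXt} in the dimension at hand and then control the sign of every term appearing in it. Specializing Theorem~\ref{chiXt} to $d=2$ gives
\[
\nu(X_0,0)-\sum_{x\in S(X_t)}\nu(X_t,x)=\chi(X_t)-1.
\]
So the corollary will follow from two sign estimates: (a) $\chi(X_t)\ge 1$, and (b) $\nu(X_t,x)\ge 0$ for every $x\in S(X_t)$. Granting both, every summand is non-negative, hence each single term $\nu(X_t,x)$ is bounded above by the whole sum $\sum_{x'\in S(X_t)}\nu(X_t,x')$, which by (a) and the displayed identity is at most $\nu(X_0,0)$. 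This is exactly the desired inequality.

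For step (a), I would use that $X_t\cap B_\epsilon$ is a $2$-dimensional analytic set sitting in a ball, so it has the homotopy type of a CW-complex of real dimension at most $2$; in particular $H_i(X_t)=0$ for $i>2$ and $\chi(X_t)=\beta_0(X_t)-\beta_1(X_t)+\beta_2(X_t)$. By Lemma~\ref{Xtconexo}, $X_t$ is connected, so $\beta_0(X_t)=1$, and by Lemma~\ref{beta1Xt}, $\beta_1(X_t)=0$. Hence $\chi(X_t)=1+\beta_2(X_t)\ge 1$, which yields $\nu(X_0,0)\ge\sum_{x\in S(X_t)}\nu(X_t,x)$.

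For step (b), I would run the identical argument locally at each singular point. Each germ $(X_t,x_i)$ is a $2$-dimensional IDS whose determinantal smoothing is $X_{t,A}\cap D_i$; this smoothing is connected by Lemma~\ref{XAconexo} and satisfies $\beta_1=0$ by \cite{St}, and being $2$-dimensional it again carries no homology above degree $2$. Therefore its Euler characteristic equals $1+\beta_2\ge 1$, so $\nu(X_t,x_i)=\chi(X_{t,A}\cap D_i)-1\ge 0$. Combining (a) and (b) closes the argument.

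The step I expect to be the main obstacle is the homotopy-dimension bound invoked in (a) and (b): one must be sure that a $2$-dimensional determinantal set in a ball (singular in the global case, a genuine smoothing in the local one) has vanishing homology in degrees above $2$, so that the Euler characteristic reduces to $1+\beta_2$ with $\beta_2\ge 0$. This is precisely the feature peculiar to $d=2$ and the reason the statement does not extend to higher dimension, where $\beta_2(X_A)$ can already be nonzero and the corresponding positivity of $\chi-1$ fails.
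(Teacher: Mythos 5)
Your proposal is correct and is essentially the paper's own argument: the paper likewise specializes Theorem~\ref{chiXt} to $d=2$, uses Lemmas~\ref{Xtconexo} and~\ref{beta1Xt} to get $\chi(X_t)-1=\beta_2(X_t)\ge 0$, and uses $\nu(X_t,x)=\beta_2(X_{t,A})\ge 0$ for the local smoothings, exactly your steps (a) and (b). Your closing remark correctly identifies the $d=2$ feature (homotopy dimension $\le 2$ forcing $\chi-1=\beta_2\ge0$) that the paper leaves implicit.
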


\begin{proof} We know that $\nu(X_t,x)=\beta_2(X_{t,A})\ge0$, for any $x\in S(X_t)$. Hence, by \ref{chiXt}, \ref{Xtconexo} and \ref{beta1Xt}:
$$
\nu(X_t,x)\le\sum_{x\in S(X_t)} \nu(X_t,x)=\nu(X_0,0)-\chi(X_t)+1=\nu(X_0,0)-\beta_2(X_t)\le\nu(X_0,0).
$$
\end{proof}

In order to proof the main theorem of this section (Theorem \ref{tipotopconst}), we will need to use the fact that $X_t$ is contractible. This could be concluded from Theorem \ref{Xtconexo} if we knew that $X_t$ was $(d-1)$-connected. In the two following lemmas, we show that this is true if the determinantal smoothing of $(X_t,x)$ is assumed to be $(d-1)$-connected, for any singular point $x$.

\begin{lem}\label{lema1}
Let $(X_0,0)$ be an IDS of dimension $d\ge 2$ and let $\mathcal X$ be a determinantal deformation of $(X_0,0)$. If there is a determinantal smoothing $X_A$ of $(X_0,0)$ such that $\pi_1(X_A)=1$, then $\pi_1(X_t)=1$ for $t$ small enough.
\end{lem}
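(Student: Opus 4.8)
The plan is to run the Seifert--van Kampen theorem against the collapsing map $r\colon X_{t,A}\to X_t$ fixed above, in the same spirit in which the proofs of Lemmas \ref{Xtconexo} and \ref{beta1Xt} ran Mayer--Vietoris against $r$. First I record that by Lemma \ref{smoothing} the space $X_{t,A}$ is homeomorphic to $X_A$, so the hypothesis gives $\pi_1(X_{t,A})=1$. I keep the decomposition $X_t=U\cup V$ with $U=\bigsqcup_{i=1}^k U_i$, $U_i=X_t\cap\overline D_i$, and $V=X_t\cap(B_\epsilon-\bigcup_i D_i)$; since the discs are disjoint, $U_i\cap V=L_i$, the link of $(X_t,x_i)$, and $r$ restricts to a homeomorphism $\tilde V\to V$. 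Recall also, as in the proof of Theorem \ref{chiXt}, that each $U_i$ is contractible.

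The first step is to show that the inclusion induces a surjection $\pi_1(V,v_0)\to\pi_1(X_t,v_0)$ for a basepoint $v_0\in V$. Here I use that for $d\ge 2$ the germ $(X_t,x_i)$ is normal, hence irreducible, so each link $L_i$ is connected, and that $V$ is connected; the latter follows from the connectedness of $X_t$ (Lemma \ref{Xtconexo}) together with the connectedness of the $L_i$, since reattaching the connected pieces $U_i$ to $V$ along the connected sets $L_i$ cannot join distinct components of $V$. Thickening slightly to open sets and gluing the $U_i$ to $V$ one at a time along the $L_i$, van Kampen shows inductively that $\pi_1$ of the enlarged space is a quotient of $\pi_1$ of the previous one, because $\pi_1(U_i)=1$. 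Hence every class of $\pi_1(X_t,v_0)$ is represented by a loop contained in $V$.

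The second step pushes such a loop through the smoothing. Given a loop $\ell\subset V$, I transport it by $(r|_{\tilde V})^{-1}$ to a loop $\tilde\ell\subset\tilde V\subseteq X_{t,A}$. As $\pi_1(X_{t,A})=1$, there is a map $H\colon D^2\to X_{t,A}$ with $H|_{\partial D^2}=\tilde\ell$; then $r\circ H\colon D^2\to X_t$ is a null-homotopy in $X_t$ of $r\circ\tilde\ell=\ell$. Together with the first step this shows that every class of $\pi_1(X_t,v_0)$ is trivial, so $\pi_1(X_t)=1$.

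The one delicate point I expect is the bookkeeping required to apply van Kampen with several pieces and the correct open neighbourhoods, in particular establishing that $V$ and each link $L_i$ are connected, which is precisely where the hypothesis $d\ge 2$ enters through the normality of the $(X_t,x_i)$. Once the surjectivity of $\pi_1(V,v_0)\to\pi_1(X_t,v_0)$ is secured, the transport of a contraction through $r$ is formal.
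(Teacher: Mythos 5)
Your proposal is correct and follows essentially the same route as the paper: decompose $X_t$ into the Milnor balls $U_i$ and their complement $V$, use connectedness of the links (normality, $d\ge2$) plus Van Kampen applied one ball at a time to get surjectivity of $\pi_1(V)\to\pi_1(X_t)$, and then kill loops by passing through the homeomorphism $r|_{\tilde V}\colon\tilde V\to V$ into the simply connected $X_{t,A}\cong X_A$. The only cosmetic differences are that the paper proves connectedness of $V$ via a reduced Mayer--Vietoris sequence rather than your component-counting argument, and packages your explicit transport of the null-homotopy as a commutative square of fundamental groups.
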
 

\begin{proof} In this proof we change slightly the notation for the sets $U,V,\tilde U,\tilde V$ from the beginning of this section. Since we are going to use Van Kampen theorem, we need that these sets are open.


Let $D_i'\subset D_i$ be another ball with center $x_i$. Let us denote 
$$
V=X_t-(\cup_{i=1}^nD_i'),\quad \tilde{V}=X_A-(\cup_{i=1}^nD_i'),
$$
 and, for each $i=1, \dots, k$, 
$$
U_i=X_t\cap D_i,\quad U=U_1\cup\dots\cup U_k.
$$

We observe that $U_1$ is contractible (because $D_1$ is a Milnor ball for $(X_t,x_1)$) and, therefore, connected and $U_1\cap V$ is also connected (because it has the same homotopy type of the link of $(X_t,x_1)$, see the proof of Lemma \ref{XAconexo}). We will show that $V$ is also connected. In fact, we consider the following Mayer-Vietoris sequence in the reduced homology:
$$\xymatrix{\dots\ar[r]&\tilde H_0(U\cap V)\ar[r]&\tilde H_0(U)\oplus\tilde H_0(V)\ar[r]&\tilde H_0(X_t)}.$$
Since $\tilde H_0(X_t)=0$ by Lemma \ref{Xtconexo}, the map
$$\xymatrix{\tilde H_0(U\cap V)\ar[r]&\tilde H_0(U)\oplus\tilde H_0(V)}$$
is surjective. However, the number of connected components of $U\cap V$ is equal to the number of connected components of $U$, so $\tilde H_0(U\cap V)\to\tilde H_0(U)$ is an isomorphism and  therefore, $\tilde H_0(V)=0$.

By the Van Kampen theorem, the induced map
$$\xymatrix{ \pi_1(U_1)*_{\pi_1(U_1\cap V)}\pi_1(V)\ar[r]&\pi_1(U_1\cup V)}$$
is an isomorphism. But $\pi_1(U_1)=1$, hence 
$$\xymatrix{\pi_1(V)\ar[r]&\pi_1(U_1\cup V)}$$ 
is surjective.

Now, we denote $V_2=V\cup U_1$. We know, similarly, that $U_2$, $V_2$ e $U_2\cap V_2$ are connected. By the Van Kampen theorem, the induced map
$$\xymatrix{ \pi_1(U_2)*_{\pi_1(U_2\cap V_2)}\pi_1(V)\ar[r]&\pi_1(U_2\cup V_2)}$$
is an isomorphism. But again $\pi_1(U_2)=1$, so $pi_1(V_2)\to\pi_1(U_1\cup V_2)$ is surjective. By taking composition with the above map, we have that
$$\xymatrix{\pi_1(V)\ar[r]&\pi_1(U_1\cup U_2\cup V)}$$
is also surjective.

Proceeding in this way, we conclude that there the following map is surjective:
$$\xymatrix{\pi_1(V)\ar[r]&\pi_1(U_1\cup\dots\cup U_n\cup V)=\pi_1(X_t)}.$$

We have, then, the following commutative diagram
$$\xymatrix{\pi_1(V)\ar[r]&\pi_1(X_t)\\ \pi_1(\tilde V)\ar[u]\ar[r]&\pi_1(X_A)=1\ar[u]}$$
where the top line is an epimorphism and the left column is an isomorphism. Therefore, $\pi_1(X_t)=1$.
\end{proof}

\begin{lem}\label{d-1conexo}
Let $X_0$ be an IDS of dimension $d\ge 1$ and let $X_t$ be a determinantal deformation of it such that the determinantal smoothing of $(X_t,x)$ is $(d-1)$-connected, for all $x\in S(X_t)$ and $t$ small enough. Then $X_t$ is also $(d-1)$-connected, for all $t$ small enough.
\end{lem}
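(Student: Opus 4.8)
The goal is to show that $X_t$ is $(d-1)$-connected, i.e. $\pi_i(X_t)=1$ for $i=0,\dots,d-1$. The case $d=1$ is already covered by Lemma \ref{Xtconexo} (connectedness), so I would assume $d\ge 2$ throughout. The natural strategy is induction on the homotopy dimension $i$, mirroring the $\pi_1$-argument in Lemma \ref{lema1} but now carrying it through all intermediate homotopy (or homology) groups. The key structural ingredients are the decomposition $X_t = U\cup V$ with $U=\cup_i U_i$ a disjoint union of Milnor balls for the singular points $(X_t,x_i)$, together with the comparison map $r:X_{t,A}\to X_t$ which collapses the smoothing pieces $\tilde U_i=X_{t,A}\cap D_i$ to the points $x_i$ and is a homeomorphism elsewhere. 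Since the smoothing $X_{t,A}\cong X_A$ is $(d-1)$-connected by hypothesis (applied at each $x_i$, and via Lemma \ref{XAconexo} for the reference smoothing), I would transport this connectivity to $X_t$.

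\textbf{The main steps.} First, handle $\pi_1$: by Lemma \ref{lema1} we already have $\pi_1(X_t)=1$ (the hypothesis that each local smoothing is $(d-1)$-connected with $d\ge2$ supplies $\pi_1(X_A)=1$). So $X_t$ is simply connected, and by the Hurewicz theorem it suffices to prove $H_i(X_t)=0$ for $i=1,\dots,d-1$. Second, I would run the Mayer–Vietoris sequence for $(U,V)$ in $X_t$ and for $(\tilde U,\tilde V)$ in $X_{t,A}$, linked by the vertical maps induced by $r$, exactly as in Lemmas \ref{Xtconexo} and \ref{beta1Xt} but in each degree $i\le d-1$. The crucial observation is that $V$ is homeomorphic to $\tilde V$ under $r$, so the map $H_i(V)\to H_i(X_t)$ and its counterpart $H_i(\tilde V)\to H_i(X_{t,A})$ fit into a commuting square where the left vertical arrow is an isomorphism. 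Third, I would show the intersection terms $U\cap V$ contribute nothing in the relevant range: each $U_i$ is contractible (Milnor ball) and each $U_i\cap V$ has the homotopy type of the link of $(X_t,x_i)$, which — because $(X_t,x_i)$ is normal for $d\ge2$ and its smoothing is $(d-1)$-connected — is itself highly connected (the link is $(d-2)$-connected by the same Stevens-type argument from \cite{St} used in Lemma \ref{XAconexo}). This forces the connecting maps in Mayer–Vietoris to vanish in degrees $\le d-1$, so that $H_i(V)\to H_i(X_t)$ is surjective for each such $i$. Finally, since $H_i(X_{t,A})=H_i(X_A)=0$ for $1\le i\le d-1$ by $(d-1)$-connectedness of the smoothing, chasing the commuting square gives $H_i(X_t)=0$.

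\textbf{The expected obstacle.} The delicate point is controlling the links $U_i\cap V$ and the intersection terms in the full range $i\le d-1$, rather than just in degree $0$ or $1$ as in the earlier lemmas. I must verify that the link of each $(X_t,x_i)$ is sufficiently connected so that the Mayer–Vietoris connecting homomorphisms do not feed spurious classes into $H_i(X_t)$; this is where the hypothesis that every local smoothing is $(d-1)$-connected is essential, via the long exact sequence of the pair $(\overline{X_{t,A}\cap D_i},\partial)$ as in the proof of Lemma \ref{XAconexo} and \cite{St}. Once the link connectivity is established, the inductive homology comparison is routine; the main work is the geometric input that normality plus smoothing-connectedness yields the required connectivity of the boundary/link, after which Hurewicz and the commuting Mayer–Vietoris ladder close the argument.
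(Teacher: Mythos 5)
Your proposal is correct and follows essentially the same route as the paper: Lemma \ref{lema1} plus Hurewicz reduces the problem to showing $H_i(X_t)=0$ for $2\le i\le d-1$, which is done by comparing the Mayer--Vietoris sequences of $(U,V)$ in $X_t$ and $(\tilde U,\tilde V)$ in $X_{t,A}$ through the collapse map $r$. The only difference is in the diagram chase: the paper deduces surjectivity of $\gamma_i\colon H_i(X_{t,A})\to H_i(X_t)$ directly from the five lemma, using that $\alpha_{i-1}$ is an isomorphism (since $r$ is a homeomorphism on $\tilde U\cap\tilde V$) and that $\beta_{i-1}$ is injective (since $H_{i-1}(\tilde U)=0$ by hypothesis), so it never needs your auxiliary step establishing $(d-2)$-connectivity of the links of the $(X_t,x_i)$ via \cite{St} --- that input is correct but superfluous.
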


\begin{proof}
If $d=1$, the result follows from Lemma \ref{Xtconexo}. If $d\ge 2$, from Lemma \ref{lema1}, we know that $\pi_1(X_t)=1$ for $t$ small enough. We will show that $H_i(X_t)=0$ for any $i=2, \dots, k$. 

We keep the notation of the beginning of this section for the sets $U,V,\tilde U,\tilde V$.
We consider again the Mayer-Vietoris sequences as in the proof of Lemma \ref{Xtconexo}:
\begin{align*}
&\begin{CD}
\dots @>>>  H_i(U\cap V) @>>>  H_i({U})\oplus  H_i({V})  @>>>  H_{i}(X_t) @>>> \\
@.     @AA\alpha_i A   @AA\beta_i A   @AA\gamma_i A \\
\dots @>>>  H_i( \tilde U\cap \tilde V) @>>>  H_i({\tilde U})\oplus H_i({\tilde V})  @>>>  H_{i}(X_{t,A}) @>>>
\end{CD}\\
&\begin{CD}
@>>>  H_{i-1}(U\cap V) @>>>  H_{i-1}({U})\oplus  H_{i-1}({V})  @>>>  H_{i-1}(X_t) @>>>\dots\\
@. @AA\alpha_{i-1}A   @AA\beta_{i-1} A   @AA\gamma_{i-1} A @.\\
@>>> H_{i-1}(\tilde U\cap \tilde V) @>>>  H_{i-1}({\tilde U})\oplus  H_{i-1}({\tilde V})  @>>>  H_{i-1}(X_{t,A}) @>>>\dots
\end{CD}
\end{align*}

We use an appropriate version of the five lemma (see \cite{H}). We have: 
\begin{enumerate}
\item $\beta_i$ is an epimorphism, since $r$ is a homeomorphism between $\tilde V$ and $V$ and $H_i(U)=0$ (each connected component $U\cap D_j$ is contractible). 
\item $\alpha_{i-1}$ is an isomorphism since $r$ is a homeomorphism between $\tilde U\cap \tilde V$ and $U\cap V$. 
\item $\beta_{i-1}$ is a monomorphism, since $H_{i-1}({\tilde U})=0$ from the hypothesis and $\beta_{i-1}$ is an isomorphism between $ H_{i-1}({\tilde V})$ and $H_{i-1}({V})$. 
\end{enumerate}
Therefore, $\gamma_i$ is an epimorphism and since $H_i(X_{t,A})=0$, we also have $ H_i(X_{t})=0$.

\end{proof}

It is known that any $1$-parameter family of hypersurfaces, or more generally ICIS, with dimension different of two has constant topological type if and only if it has constant Milnor number (see \cite{LR, Param}). In the following theorem, which is the main result of this section, we generalize this fact for a family of IDS. Our proof follows the arguments given in \cite{BG, LR, Param} for space curves, hypersurfaces or ICIS, respectively.

\begin{thm}\label{tipotopconst}
Let $\{(X_t,0)\}$ be a good family of IDS of dimension $d\neq 2$ such that the determinantal smoothing of $(X_t,0)$ is $(d-1)$-connected and $\nu(X_t,0)$ is constant for $t$ small enough. Then the family has constant topological type.
\end{thm}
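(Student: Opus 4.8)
The plan is to run the Lê--Ramanujam argument \cite{LR} (in the form used for space curves in \cite{BG} and for ICIS in \cite{Param}), feeding into it the homotopy-theoretic consequences of $\nu$-constancy established in the previous sections in place of the ``constant Milnor fibre'' input of the classical case. I would first separate the low-dimensional cases: for $d=0$ the smoothing is a finite set of constant cardinality and the statement is immediate, while for $d=1$ the family is a family of space curves and the statement is the content of \cite{BG}. Thus the real case is $d\ge 3$, and the excluded value $d=2$ is exactly the one for which the $h$-cobordism theorem used below is unavailable.

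The first genuine step is to prove that every fibre $X_t$ is contractible. Since the family is good, $S(X_t)=\{0\}$ for all $t$, so Theorem \ref{chiXt} applied to $X_t$ as a deformation of $(X_0,0)$ gives $\nu(X_0,0)-\nu(X_t,0)=(-1)^d(\chi(X_t)-1)$; constancy of $\nu$ then forces $\chi(X_t)=1$ for every $t$, including $t=0$. The hypothesis that the determinantal smoothing of $(X_t,0)$ is $(d-1)$-connected lets me invoke Lemma \ref{d-1conexo}, so that $X_t$ itself is $(d-1)$-connected. A $(d-1)$-connected CW-complex of dimension $d$ has reduced homology concentrated in degree $d$, so $\chi(X_t)=1$ forces $H_d(X_t)=0$, and by Hurewicz and Whitehead $X_t$ is contractible. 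This is where the two standing hypotheses are spent: together with the automatic constancy of the smoothings (Lemma \ref{smoothing}), contractibility of the fibres is precisely the topological data on which the trivialization will rest.

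The second step is the construction of the trivialization. After shrinking $D$ I would pick $\epsilon$ with $X_0\pitchfork S_\epsilon$, so that by openness $X_t\pitchfork S_\epsilon$ for all $t$ and $\mathcal X\cap(S_\epsilon\times D)\to D$ is a proper submersion; Ehresmann's theorem then trivializes the boundary pair $(S_\epsilon\times D,\mathcal X\cap(S_\epsilon\times D))$, so in particular the links $K_t=X_t\cap S_\epsilon$ are all diffeomorphic and isotopic in $S_\epsilon$. Integrating a controlled lift of $\partial/\partial t$ extends this outward trivialization over $\overline{\mathcal X}\setminus(\{0\}\times D)$, producing homeomorphisms of the collars $\overline{X_t}\setminus B_\delta$. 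By the local conical structure of analytic germs it then remains to trivialize the family across the singular section $\{0\}\times D$, compatibly with the collar identification already obtained.

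This last extension is the crux and the main obstacle. Following \cite{LR}, I would realise the part of the total space lying over a path from $0$ to $t$ and close to the singular section as a cobordism whose two ends are the links $K_0$ and $K_t$, and show that it is an $h$-cobordism: the inclusions of the ends are homotopy equivalences precisely because each fibre is contractible, so that $\nu$-constancy prevents any vanishing homology from being created along the section. Since $d\ge 3$ the ends have dimension $2d-1\ge 5$ and are simply connected (by the connectivity of the smoothing together with \cite{St}), so the $h$-cobordism theorem applies and the cobordism is a product; combined with the collar trivialization this yields a homeomorphism of pairs $(B_\epsilon,X_t)\cong(B_\epsilon,X_0)$ and hence constant topological type. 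The delicate points, exactly as in the hypersurface case, are verifying that this cobordism is genuinely an $h$-cobordism --- this is where $\chi(X_t)=1$ and $(d-1)$-connectivity are used --- and keeping the fundamental group under control so that the theorem may legitimately be applied; both break down when $d=2$, which is why that dimension must be excluded.
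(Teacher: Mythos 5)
Your overall strategy is the one the paper follows: use Theorem \ref{chiXt} with $S(X_t)=\{0\}$ and $\nu$-constancy to get $\chi(X_t)=1$, combine with Lemma \ref{d-1conexo} to conclude each $X_t\cap B_\epsilon$ is contractible, trivialize over the sphere $S_\epsilon$ by Ehresmann, and then realise the region between two Milnor radii as an $h$-cobordism of pairs to which Smale's theorem applies. The contractibility step is exactly right and is where the two hypotheses are spent, as you say.

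There is, however, one genuine gap in your final step. For the \emph{relative} $h$-cobordism theorem of Smale (\cite[Theorem 1.4]{S}) applied to the pair $(S_\epsilon, X_t\cap S_\epsilon)$, simple connectivity of the ends $K_t=X_t\cap S_{\epsilon}$ is not enough: you must also control $\pi_1(S_\epsilon-K_t)$. The paper gets this from the codimension estimate $\codim_{S_\epsilon}(X_t\cap S_\epsilon)=2(N-d)\geq 4$, which requires $N-d\geq 2$, i.e.\ $\codim X_0>1$. For hypersurfaces ($N-d=1$) the complement of the link has real codimension $2$ and its fundamental group is in general nontrivial, so your assertion that ``the $h$-cobordism theorem applies'' because the ends are simply connected breaks down precisely in that case --- and hypersurfaces of dimension $d\geq 3$ are IDS covered by the statement. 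The paper resolves this by splitting off the hypersurface case and quoting \cite{LR} for it (where a more delicate argument is carried out), reserving the cobordism argument for $d>2$ and $\codim X_0>1$; you split off only $d=0,1$. You should either add the hypersurface case as a separate appeal to \cite{LR}, or supply the extra fundamental-group argument; as written the proof is incomplete for codimension one. A smaller point: the cobordism should be taken inside a single fibre, namely $V=X_t\cap(B_\epsilon-\stackrel{\circ}{B_{\epsilon_t}})$ between $X_t\cap S_\epsilon\cong X_0\cap S_\epsilon$ and the link $X_t\cap S_{\epsilon_t}$, rather than ``over a path from $0$ to $t$''; with that formulation the homotopy-equivalence claims for the two ends follow from Van Kampen, Mayer--Vietoris and Poincar\'e duality for cobordisms, using the contractibility of $X_t\cap B_\epsilon$ and $X_t\cap B_{\epsilon_t}$ that you established.
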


\begin{proof}

We observe that we have three cases to consider: 

Case 1: If $(X,0)$ is a curve germ. In this case the theorem is showed in \cite{BG}.

Case 2: If $(X,0)$ is an hypersurface germ. This result is showed in \cite{LR}.

Case 3: If $d>2$ and $\mbox{codim} X_0>1$. We will proceed to the proof of this case.

Choose an $\epsilon >0$ small enough such that the origin is the only critical point in $X_0\cap B_\epsilon$ and such that $X_0$ is transversal to $S_\eta$ for any $\eta$ such that $0<\eta\leq\epsilon$. If $t$ is small enough, $X_t$ is also transversal to $S_\epsilon$. Consider the map
\begin{eqnarray*}
\pi:\mathcal X\cap(S_\epsilon\times \C)&\to& D\subset\C\\
(x,t)&\mapsto&t
\end{eqnarray*}
Since $\pi$ is a proper submersion, by the Ehresmann fibration theorem, $\pi$ is a locally trivial fibration. Therefore for $t$ small enough $(S_\epsilon, X_t\cap S_\epsilon)$ is diffeomorphic to $(S_\epsilon,X_0\cap S_\epsilon)$.

We know that $(B_\epsilon,X_0\cap B_\epsilon)$ is homeomorphic to $(C(S_\epsilon), C(X_0\cap S_\epsilon))$, where $C(M)$ denotes the cone of $M$.
For each $t$, let $0<\epsilon_t<\epsilon$ such that $0$ is the only critical point of $X_t\cap B_{\epsilon_t}$ and $X_t$ is transversal to $S_\eta$ for any $0<\eta\leq\epsilon_t$. Then $(B_{\epsilon_t},X_t\cap\epsilon_t)$ is homeomorphic to $(C(S_{\epsilon_t}),C(X_t\cap S_{\epsilon_t}))$. 
We will show that $(S_{\epsilon_t},X_t\cap S_{\epsilon_t})$ is homeomorphic to $(S_\epsilon,X_t\cap S_\epsilon)$ and, therefore,

\begin{equation*}
(B_\epsilon,X_0)\cong(C(S_\epsilon),C(X_0\cap S_\epsilon))
\cong(C(S_{\epsilon}),C(X_t\cap S_{\epsilon}))\cong(C(S_{\epsilon_t}),C(X_t\cap S_{\epsilon_t}))\cong (B_{\epsilon_t},X_t).
\end{equation*}

Let $M= B_\epsilon-\stackrel{\circ}{B_{\epsilon_t}}$ and $V=M\cap X_t$. The boundary of $M$ is $\partial M=S_\epsilon\sqcup S_{\epsilon_t}$ and the boundary of $V$ is $\partial V=(X_t\cap S_\epsilon)\sqcup (X_t\cap S_{\epsilon_t})$ (see fig. \ref{cobordismo}). Obviously, the inclusions $S_\epsilon,S_{\epsilon_t} \hookrightarrow M$ are homotopy equivalences, we show that $X_t\cap S_\epsilon,X_t\cap S_{\epsilon_t}\hookrightarrow V$
 are also homotopy equivalences. By Whitehead and Hurewicz theorems, we only need to prove that the induced maps between the fundamental groups and the homology groups are isomorphisms. 
  
We have that $X_t\cap B_{\epsilon_t}$ and $X_t\cap B_\epsilon$ are contractible (in the first one $B_{\epsilon_t}$ is a Milnor ball for $(X_t,0)$ and the second one follows from Lemma \ref{d-1conexo} and Theorem \ref{chiXt}). By Van Kampen theorem, 
$$
1=\pi_1(X_t\cap B_\epsilon)=\pi_1(X_t\cap B_{\epsilon_t})*_{\pi_1(X_t\cap S_{\epsilon_t})} \pi_1(V),
$$
but $\pi_1(X_t\cap B_{\epsilon_t})=1$, $\pi_1(X_t\cap S_{\epsilon_t})=\pi_1(X_{t,A})=1$, so necessarily $\pi_1(V)=1$ and the map $\pi_1(X_t\cap S_{\epsilon_t})\to \pi_1(V)$ is an isomorphism. On the other hand, $\pi_1(X_t\cap S_\epsilon)=\pi_1(X_0\cap S_\epsilon)=\pi_1(X_A)=1$, hence the map 
$\pi_1(X_t\cap S_{\epsilon})\to \pi_1(V)$ is also an isomorphism. 

For the homology, we use the Mayer-Vietoris sequence:
$$
\begin{CD}
\dots @>>>  H_{i+1}(X_t\cap B_\epsilon)@>>> H_i(X_t\cap S_{\epsilon_t}) @>>>  H_i(X_t\cap B_{\epsilon_t})\oplus  H_i({V})@>>>\\  @>>>  H_{i}(X_t\cap B_\epsilon) @>>>\dots
\end{CD}
$$
Since $H_{i}(X_t\cap B_\epsilon)=0$ and $H_i(X_t\cap B_{\epsilon_t})=0$, for all $i>0$, we deduce that the map $H_i(X_t\cap S_{\epsilon_t})\to H_i(V)$ is an isomorphism for all $i>0$. Finally, the map $H_i(X_t\cap S_{\epsilon})\to H_i(V)$ is also an isomorphism by the Poincaré duality theorem for cobordism (see \cite{K}).

\begin{figure}[ht!]
\begin{center}
   \psfrag{B}{$B_\epsilon$}
  \psfrag{X0}{$X_0$}
\psfrag{Xt}{$X_t$}
 \psfrag{Bt}{$B_{\epsilon_t}$}
  \psfrag{M}{$M$}
 \psfrag{V}{$V$}
\includegraphics[height=7cm]{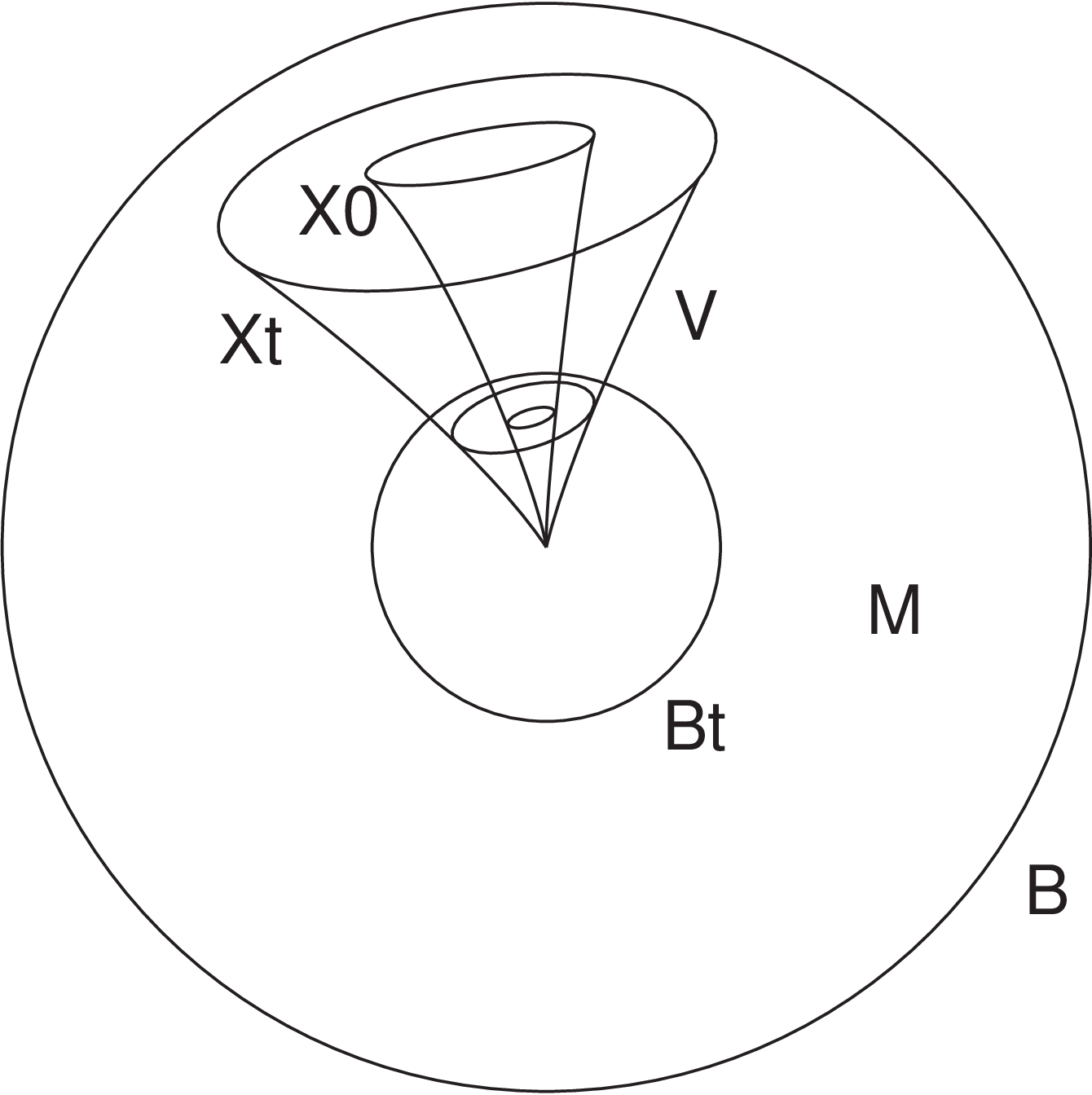}
\caption{}\label{cobordismo}
\end{center}
\end{figure}

Therefore, the pairs $(S_\epsilon, X_t\cap S_\epsilon)$ and $(S_{\epsilon_t}, X_t\cap S_{\epsilon_t})$ are $h$-cobordant (see \cite{S}).
Furthermore, $\dim X_t\cap S_\epsilon=2d-1\geq 5$ and $\codim_{S_\epsilon}(X_t\cap S_\epsilon)=2N-1-(2d-1)=2(N-d)\geq 4$, (where now the dimensions are taken as real manifolds). This implies that $\pi_1(S_\epsilon-S_\epsilon\cap X_t)=1$. Hence, by the relative $h$-cobordism theorem of Smale \cite[Theorem 1.4]{S}, $(S_{\epsilon_t},X_t\cap S_{\epsilon_t})$ is homeomorphic to $(S_\epsilon,X_t\cap S_\epsilon)$.

\end{proof}

We remark that the condition that the determinantal smoothing is $(d-1)$-connected is satisfied in case that $X_t$ is a  curve (\cite{BG}); $d=N-1$, that is, $X_t$ is a hypersurface (\cite{M}) and $s=1$, that is, $X_t$ is an ICIS (\cite{Hamm}). Moreover, it is also known that the constancy of $\nu(X_t,0)$ implies that the family is good in such cases. 
Therefore, Theorem \ref{tipotopconst} indeed extends the results of \cite{LR,Param}.


\section{Whitney Equisingularity}

It is well known that any Whitney equisingular family is topologically trivial by the Thom first isotopy lemma. However, the converse is not true in general. In fact,
the Briançon-Speder example \cite{BS} shows that the $\mu$-constant condition is not a sufficient condition for the Whitney equisingularity of a family (even for a family of hypersurfaces). However, it is known that a family of ICIS $(X_t,0)$ of dimension $d$ is Whitney equisingular if and only if the polar multiplicities $m_i(X_t,0)$, $i=0,\dots,d$ are constant on $t$ (see Gaffney and Teissier \cite{G,T}).
%
In this section, we use our definition of $d$-th polar multiplicity to generalize the results of Gaffney and Teissier in \cite{G,T}.

We start by giving the relationship between the $d$-th polar multiplicity of an IDS of dimension $d$ and of a determinantal deformation of it. 
\begin{lem}
Let $(X_0,0)$ be a $d$-dimensional IDS and let $X_t$ be a determinantal deformation. For a generic linear projection $p:\C^N\to\C$, we have:
\begin{equation*}
m_d(X_0,0)=\sum_{y\in X_t^0} \mu(p|_{X_t},y)+\sum_{x\in S(X_t)}m_d(X_t,x),
\end{equation*}
where $X_t^0$ denotes the smooth part of $X_t$.
\end{lem}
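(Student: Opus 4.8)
The plan is to compute both sides by realising $m_d(X_0,0)$ as the number of critical points of $p$ on a single determinantal smoothing $X_{t,A}$, and then partitioning those critical points according to where they concentrate as $A\to0$.

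First I would reinterpret the left-hand side as a count on $X_{t,A}$. By definition $m_d(X_0,0)=\#S(p|_{X_A})=\#S(p|_{X_{0,A}})$. Fixing a generic $A$ as in the setup of this section (so that $(t,A)\in W$ for every $t\in D$ after shrinking $D$), Lemma \ref{smoothing} makes $\{X_{t,A}\}_{t\in D}$ a smooth family whose members are all homeomorphic, and which we may take to be proper over $D$ since $X_{t,A}$ is transverse to $S_\epsilon$. Because $p$ is generic, $p|_{X_{t,A}}$ is a Morse function whose critical points all lie in the interior of $B_\epsilon$; by the principle of conservation of the number of critical points, $\#S(p|_{X_{t,A}})$ is locally constant in $t$, hence equal to its value at $t=0$. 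Thus $m_d(X_0,0)=\#S(p|_{X_{t,A}})$ for every small $t$.

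Next I would localise this count. Fix $t$, write $S(X_t)\cap B_\epsilon=\{x_1,\dots,x_k\}$ and take the discs $D_i$ as above. For $A$ small enough, every critical point of $p|_{X_{t,A}}$ lies either inside some $D_i$ or outside $\cup_i D_i$, and I would treat the two cases separately. Inside $D_i$ the set $X_{t,A}\cap D_i$ is by construction a determinantal smoothing of the IDS $(X_t,x_i)$ and $p$ restricts to a generic linear function on it, so the very definition of the $d$-th polar multiplicity gives $\#\bigl(S(p|_{X_{t,A}})\cap D_i\bigr)=m_d(X_t,x_i)$; summing over $i$ produces $\sum_{x\in S(X_t)}m_d(X_t,x)$. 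Outside $\cup_i D_i$, the variety $X_t$ is smooth, so there $X_{t,A}$ is a small smooth perturbation of $X_t$ and $p|_{X_{t,A}}$ is a morsification of the function germ $p|_{X_t}$ at each of its isolated critical points $y\in X_t^0$. By conservation of the Milnor number at a smooth point, the number of critical points of $p|_{X_{t,A}}$ collapsing onto a given $y$ equals $\mu(p|_{X_t},y)$, so the outside count is $\sum_{y\in X_t^0}\mu(p|_{X_t},y)$, a finite sum by genericity of $p$. Adding the two contributions yields the stated formula.

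The main obstacle I anticipate is the bookkeeping required to make the two conservation arguments rigorous: one must use the genericity of $p$ together with the transversality of $X_t$ to $S_\epsilon$ to guarantee that, as $A\to0$, no critical point of $p|_{X_{t,A}}$ escapes to the boundary or drifts between the regions $D_i$ and $B_\epsilon-\cup_i D_i$, so that each critical point is unambiguously assigned either to a singular point $x_i$ or to a smooth critical point $y$. Once this accounting is under control, the two local identifications — $m_d(X_t,x_i)$ from the definition and $\mu(p|_{X_t},y)$ from the standard morsification count — are immediate, and the total reassembles into the claimed equality.
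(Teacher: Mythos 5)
Your proposal is correct and follows essentially the same route as the paper: both identify $m_d(X_0,0)$ with the number of critical points of a generic (slightly perturbed) linear function on the smoothing $X_{t,A}$, then split that count into contributions from Milnor discs around the singular points of $X_t$ (each giving $m_d(X_t,x_i)$ by definition) and from discs around the critical points of $p|_{X_t^0}$ (each giving $\mu(p|_{X_t},y_j)$ by the standard morsification count). The only cosmetic difference is that the paper explicitly introduces a generic linear perturbation $p_a$ of $p$ and Milnor discs $E_j$ around the smooth critical points, where you argue via conservation of number.
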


\begin{proof}
We choose a ball $B_\epsilon$ such that the origin is the only singular point of $X_0$ in $B_\epsilon$. Let $\{x_1,\dots,x_{k}\}$ be the singular set of $X_t$ and $\{y_1,\dots,y_{l}\}$ be the singular set of $p|_{X_t^0}$.

For each $i=1,\dots,k$, choose a Milnor disc $D_i$ for $(X_t, x_i)$ and for each $j=1,\dots,l$, choose a Milnor disc $E_j$ around $y_j$ for $p|_{X_t^0}$ (see fig. \ref{deform2}). Let $p_a$ be a generic linear deformation of $p$ and let $X_{t,A}$ be a determinantal smoothing such that ${p_a}|_{X_{t,A}}$ is a Morse function. We have
\begin{eqnarray*}
m_d(X_0,0)&=&\#S(p_a|_{X_{t,A}})\\
&=&\sum_{j=1}^{l} \#S(p_a|_{X_{t,A}\cap E_j})+\sum_{i=1}^{k}\#S(p_a|_{X_{t,A}\cap D_i})\\
&=&\sum_{j=1}^{l} \mu(p|_{X_t},y_j)+\sum_{i=1}^{k}m_d(X_t,x_i).
\end{eqnarray*}
\end{proof}

\begin{figure}[ht!]
\begin{center}
   \psfrag{B}{$B_\epsilon$}
  \psfrag{X0}{$X_0$}
\psfrag{Xt}{$X_t$}
 \psfrag{XtA}{$X_{t,A}$}
  \psfrag{x}{$0$}
 \psfrag{x1}{$x_1$}
\psfrag{x2}{$y_1$}
\psfrag{x3}{$x_2$}
 \psfrag{D1}{$D_1$}
\psfrag{D2}{$E_1$}
\psfrag{D3}{$D_2$}
\psfrag{p}{$p$}
\includegraphics[height=7cm]{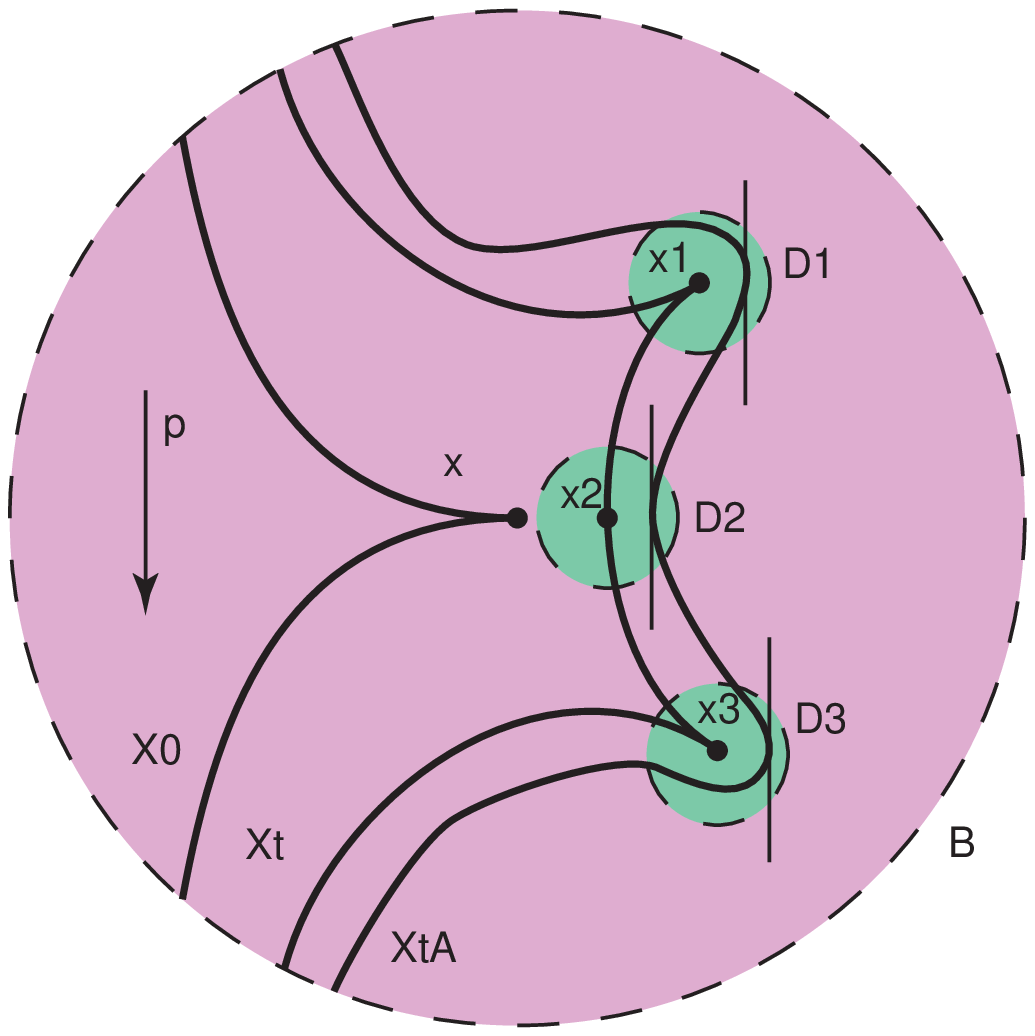}
\caption{}\label{deform2}
\end{center}
\end{figure}

We recall from Teissier \cite{T} the definition of the relative polar multiplicities $m_i(\mathcal X,\pi,0)$, with $i=0,\dots,d$,  of a $(d+1)$-dimensional variety $(\mathcal X,0)$ with respect to $\pi:(\mathcal X,0)\to(\C,0)$. Given a generic linear projection $P:\C^{N+1}\to\C^{d-i+1}$, then $m_i(\mathcal X,\pi,0)=m_0(\overline{S(P|_{\mathcal X^0})},0)$.

\begin{lem}\label{lema3}
Let $\{(X_t,0)\}_{t\in D}$ be a good family of $d$-dimensional IDS. If $m_d(X_t,0)$ is constant on $t\in D$, then $m_d(\mathcal X,\pi,0)=0$.
\end{lem}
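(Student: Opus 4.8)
The plan is to reduce the statement to the previous lemma, which expresses $m_d(X_0,0)$ as a sum of fibrewise contributions. Recall that for $i=d$ the relative polar multiplicity is computed with a generic linear projection $P\colon\C^{N+1}\to\C$, so the relative polar variety $\overline{S(P|_{\mathcal X^0})}$ is a curve whose fibre over a small $t\neq 0$ is the set $S(p|_{X_t^0})$ of critical points of the restriction of a generic linear form $p\colon\C^N\to\C$ to the smooth part $X_t^0$ of $X_t$. Here I choose $P$ so that its restriction to each slice $\C^N\times\{t\}$ has linear part $p$, which is legitimate since both genericity conditions are Zariski open, and adding a constant to $p$ on each slice does not change its critical set. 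Thus $m_d(\mathcal X,\pi,0)=m_0(\overline{S(P|_{\mathcal X^0})},0)$ vanishes precisely when this relative polar curve has no branch through the origin, and the whole problem becomes showing that $S(p|_{X_t^0})=\emptyset$ for all small $t\neq 0$.

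To prove the latter, I apply the previous lemma to the deformation $X_t$ of $X_0$: for a generic linear $p$,
\[
m_d(X_0,0)=\sum_{y\in X_t^0}\mu(p|_{X_t},y)+\sum_{x\in S(X_t)}m_d(X_t,x).
\]
Since the family is good, $S(X_t)=\{0\}$, so the second sum is just $m_d(X_t,0)$, and by hypothesis $m_d(X_t,0)=m_d(X_0,0)$ for every $t\in D$. Subtracting gives
\[
\sum_{y\in X_t^0}\mu(p|_{X_t},y)=0 .
\]
Each $y$ occurring in this sum is a genuine critical point of the holomorphic function $p|_{X_t}$ on the smooth manifold $X_t^0$, so $\mu(p|_{X_t},y)\geq 1$; hence the sum can vanish only if the index set is empty, that is, $S(p|_{X_t^0})=\emptyset$ for every small $t\neq 0$.

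Finally I translate this back to the total space. As a germ at the origin, the relative polar variety $\overline{S(P|_{\mathcal X^0})}$ is the closure of $\bigcup_{t\neq 0}S(p|_{X_t^0})\subset\mathcal X^0$, i.e. the family over $D$ of the fibrewise top polar loci. By the previous step this union is empty in a punctured neighbourhood of $0\in D$, so the germ of the relative polar curve at the origin is empty and therefore $m_d(\mathcal X,\pi,0)=m_0(\overline{S(P|_{\mathcal X^0})},0)=0$. The main obstacle is the bookkeeping in the first paragraph: one must verify carefully that Teissier's relative polar variety, restricted to the smooth fibres, coincides with the fibrewise polar loci $S(p|_{X_t^0})$ for a single generic $p$, and that a curve whose fibres over the base are empty over a punctured neighbourhood of $0$ necessarily has multiplicity zero at the origin. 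Once this identification is in place, the constancy hypothesis does all the work through the previous lemma.
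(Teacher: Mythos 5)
Your proof is correct and follows essentially the same route as the paper: apply the preceding lemma, use goodness to reduce the singular sum to the single term $m_d(X_t,0)$, cancel it against $m_d(X_0,0)$ to conclude $S(p|_{X_t^0})=\emptyset$, and then observe that the relative polar curve $\overline{S(P|_{\mathcal X^0})}$ is the closure of the fibrewise polar loci and hence empty. The only difference is that you spell out more carefully the positivity of each $\mu(p|_{X_t},y)$ and the identification of the relative polar variety with the family of fibrewise critical loci, both of which the paper treats more tersely.
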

\begin{proof}
Choose $\epsilon>0$ such that the single critical point of $X_0\cap B_\epsilon$ is the origin. From the previous lemma, we have
\begin{equation*}
m_d(X_0,0)=\sum_{y\in X_t^0}\mu(p|_{X_t},y)+m_d(X_t,0),
\end{equation*}
where $p:\C^N\to \C$ is any generic linear projection.
Since $m_d(X_t,0)$ is constant and all the terms in the right hand side of the equation are bigger or equal to $0$, $\mu(p|_{X_t},y)=0$ for all $y\in X_t^0$ and hence $p|_{X_t^0}$ is regular. We denote $P(x,t)=(p(x),t)$, then
\begin{equation*}
m_d(\mathcal X,\pi,0)=m_0(\overline{S(P|_{\mathcal X^0})},0)=m_0(\overline{\{(x,t):x\in S(p|_{X_t^0})\}},0)=m(\emptyset,0)=0.
\end{equation*}
\end{proof}

We will present now the main result of this section. The theorem bellow generalizes the results of Gaffney \cite{G} and Teissier \cite{T} by showing necessary and sufficient conditions for a family of IDS to be Whitney equisingular.
\begin{thm}
A good family of $d$-dimensional IDS $\{(X_t,0)\}_{t\in D}$ is Whitney equisingular if and only if all the polar multiplicities $m_i(X_t,0)$, $i=0,\dots,d$ are constant on $t\in D$.
\end{thm}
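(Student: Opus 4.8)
The plan is to reduce the Whitney conditions for the pair $(\mathcal X-T,T)$, with $T=\{0\}\times D$, to the vanishing of the relative polar multiplicities of the total space, and then to match each such vanishing with the constancy of the corresponding fibre polar multiplicity. Since the family is good, the singular locus of $\mathcal X$ is contained in $T$, and $\mathcal X$ is a reduced equidimensional germ of dimension $d+1$; thus the polar criterion of Teissier and L\^e--Teissier \cite{T,LT}, in the form extended by Gaffney \cite{G}, can be invoked to reduce the problem to the relative polar data. Concretely, I would use that $(\mathcal X-T,T)$ satisfies Whitney's conditions at $0$ if and only if all the relative polar multiplicities $m_i(\mathcal X,\pi,0)$, $i=0,\dots,d$, vanish, and take this characterization as the backbone of both implications.

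The heart of the argument is then the equivalence, for each fixed $i$, between $m_i(\mathcal X,\pi,0)=0$ and the constancy of $m_i(X_t,0)$ along the family. For the top index $i=d$ this is exactly Lemma \ref{lema3} together with its converse: using the decomposition $m_d(X_0,0)=\sum_{y\in X_t^0}\mu(p|_{X_t},y)+m_d(X_t,0)$ valid for a good family and a generic $p$, the number $m_d(\mathcal X,\pi,0)$ coincides with $\sum_{y}\mu(p|_{X_t},y)$, the count of vanishing critical points of the projection on the smooth part of the nearby fibre. Hence $m_d(\mathcal X,\pi,0)=0$ if and only if that sum vanishes, which by the decomposition is equivalent to $m_d(X_t,0)=m_d(X_0,0)$ being constant. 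For the lower indices $i=0,\dots,d-1$ the $m_i(X_t,0)$ are the classical L\^e--Teissier polar multiplicities, defined for an arbitrary variety, and the equivalence $m_i(\mathcal X,\pi,0)=0 \Leftrightarrow m_i(X_t,0)\ \text{constant}$ is the content of the classical relative polar theory of \cite{LT,T}: the relative polar multiplicity measures precisely the jump of the fibre polar multiplicity.

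With these equivalences in hand, both directions follow formally. If the family is Whitney equisingular, the criterion gives $m_i(\mathcal X,\pi,0)=0$ for all $i$, whence each $m_i(X_t,0)$ is constant. Conversely, if all $m_i(X_t,0)$ are constant, then $m_i(\mathcal X,\pi,0)=0$ for every $i$ (Lemma \ref{lema3} supplying the top case and the classical theory the rest), and the criterion yields the Whitney conditions; since the family is already assumed good, this is exactly Whitney equisingularity.

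I expect the main obstacle to lie in the top-dimensional case $i=d$ and in the verification that the polar criterion for Whitney conditions applies verbatim to the determinantal total space $\mathcal X$, which need not be a complete intersection. The classical L\^e--Teissier theory disposes of $i\le d-1$ because those polar multiplicities are intrinsic to any variety, but the index $i=d$ genuinely requires the determinantal $d$-th polar multiplicity defined through the smoothing, together with the Morse-theoretic counting of Lemma \ref{lema3}. Carefully establishing the converse of that lemma, namely that $m_d(\mathcal X,\pi,0)=0$ forces $p|_{X_t^0}$ to be regular near $0$ via conservation of multiplicity for the relative polar curve, is the delicate point on which the whole equivalence rests.
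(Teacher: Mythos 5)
Your handling of the top index $i=d$ is essentially the paper's argument: the decomposition $m_d(X_0,0)=\sum_{y}\mu(p|_{X_t},y)+\sum_{x}m_d(X_t,x)$ identifies the constancy of $m_d(X_t,0)$ with the emptiness of the relative polar curve $\overline{S(P|_{\mathcal X^0})}$, i.e.\ with $m_d(\mathcal X,\pi,0)=0$. But the backbone you build everything on --- that $(\mathcal X-T,T)$ is Whitney if and only if \emph{all} the relative polar multiplicities $m_i(\mathcal X,\pi,0)$, $i=0,\dots,d$, vanish --- is not the Teissier--Gaffney criterion, and the derived equivalence ``$m_i(\mathcal X,\pi,0)=0\Leftrightarrow m_i(X_t,0)$ constant'' is false for $i\le d-1$. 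For those indices the relative polar variety has positive-dimensional fibres and is generically non-empty: already for the constant family $\mathcal X=X_0\times D$, which is Whitney equisingular with all $m_i(X_t,0)$ constant, one has $m_i(\mathcal X,\pi,0)=m_i(X_0,0)\neq 0$ in general (e.g.\ $m_1=4$ for the determinantal surface in the paper's Example). What Teissier's theorem actually gives from the Whitney conditions is the \emph{constancy} of $m_i(\mathcal X,\pi,(0,t))$ along $T$, not its vanishing; the lower indices are handled in the paper by quoting Gaffney's Corollaries 5.6 and 5.12 directly in terms of the fibre multiplicities $m_i(X_t,0)$, $i\le d-1$, and only the index $i=d$ gets converted into a vanishing statement.

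That conversion is precisely the step missing from your forward direction. Whitney equisingularity gives that $m_d(\mathcal X,\pi,(0,t))$ is \emph{constant} on $t$; since $\overline{S(P|_{\mathcal X^0})}$ is an analytic curve and $T\not\subset S(P|_{\mathcal X^0})$, a nonzero constant value would force $T\subset\overline{S(P|_{\mathcal X^0})}-S(P|_{\mathcal X^0})$, which is $0$-dimensional --- a contradiction. Hence $m_d(\mathcal X,\pi,0)=0$, the relative polar curve is empty, $\sum_y\mu(p|_{X_t},y)=0$, and the decomposition gives $m_d(X_t,0)=m_d(X_0,0)$. (By contrast, the ``delicate converse'' you flag at the end is immediate: an analytic germ of multiplicity zero is empty, so $m_d(\mathcal X,\pi,0)=0$ trivially forces $S(p|_{X_t^0})=\emptyset$.) With the lower indices routed through Gaffney's results rather than your vanishing criterion, and with this dimension argument supplied for $i=d$, the proof closes; as written, both implications for $i\le d-1$ and the forward implication for $i=d$ rest on a criterion that does not hold.
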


\begin{proof}
Assume, first, that the family is Whitney-equisingular. From the results of Tessier in \cite{T}, $m_i(\mathcal X,\pi,(0,t))$ is constant on $t\in D$ for $i=1,\dots,d$. In particular, $m_d(\mathcal X,\pi,(0,t))$ is constant on $t\in D$.

We have that $m_d(\mathcal X,\pi,0)=0$. In fact, if it was not true, then $m_d(\mathcal X,\pi,(0,t))=m_d(\mathcal X,\pi,0)\ne0$, for $t\in D$. Therefore, since $(0,t)$ does not belong to $S(P|_{\mathcal X^0})$, we would have 
$$T=\{0\}\times D\subset \overline{S(P|_{\mathcal X^0})}-S(P|_{\mathcal X^0}),$$
but this is not possible because $\overline{S(P|_{\mathcal X^0})}$ is analytic of dimension 1 and hence $\overline{S(P|_{\mathcal X^0})}-S(P|_{\mathcal X^0})$ has dimension 0.

Now if $m_d(\mathcal X,\pi,0)=0$, we have $\overline{S(P|_{\mathcal X^0})}=\emptyset$ and thus, $S(p|_{X_t^0})=\emptyset$. This implies that 
\begin{equation*}
\sum \mu(p|_{X_t^0}, y_i)=0, 
\end{equation*}
and from Lemma \ref{lema3},
$
m_d(X_t,0)=m_d(X_0,0).
$

The fact that the other polar multiplicities are constant follows from Corollary 5.6 of \cite{G}. 

Assume, now, that $m_i(X_t,0)$ is constant for all $i=0,\dots,d$. In particular, $m_d(X_t,0)$ is constant and from Lemma \ref{lema3}, \begin{equation*}
m_d(\mathcal X, \pi,0)=0.
\end{equation*}
Hence, the family $X_t$ is Whitney equisingular by Corollary 5.12 of \cite{G}.
\end{proof}

We remark that in the case of a family of ICIS $\{(X_t,0)\}_{t\in D}$ of dimension $d$, the condition that $m_d(X_t,0)$ is constant on $t$ implies that the family is good (see \cite[5.9]{G}).

\end{document}